\documentclass[a4paper,11pt,fleqn]{article}

\usepackage[margin=1in]{geometry}
\usepackage[T1]{fontenc}
\usepackage{lmodern}
\usepackage{microtype}

\usepackage{amsmath,amssymb,amsfonts}
\usepackage{amsthm}
\usepackage{bm}
\usepackage{mathtools}
\usepackage{enumitem}
\usepackage{graphicx}
\usepackage{booktabs}
\usepackage{multirow}
\usepackage[numbers]{natbib}
\usepackage{xcolor}
\usepackage[hidelinks]{hyperref}

\newif\ifshowchanges
\showchangesfalse

\ifshowchanges
  \newcommand{\rev}[1]{{\color{blue}#1}}
  \newenvironment{revision}{\begingroup\color{blue}}{\endgroup}
  \newcommand{\newrev}[1]{{\color{magenta}#1}}
  \newenvironment{newrevision}{\begingroup\color{magenta}}{\endgroup}
  \newcommand{\finalrev}[1]{{\color{green!50!black}#1}}
  \newenvironment{finalrevision}
    {\begingroup\color{green!50!black}}
    {\endgroup}
\else
  \newcommand{\rev}[1]{#1}
  \newenvironment{revision}{}{}
  \newcommand{\newrev}[1]{#1}
  \newenvironment{newrevision}{}{}
  \newcommand{\finalrev}[1]{#1}
  \newenvironment{finalrevision}{}{}
\fi

\newcommand{\R}{\mathbb{R}}
\newcommand{\C}{\mathbb{C}}
\newcommand{\Z}{\mathbb{Z}}
\newcommand{\indicator}[1]{\mathbf{1}_{#1}}
\newcommand{\Ph}{P^{h}}
\newcommand{\Qh}{Q^{h}}
\newcommand{\Gh}{G^{h}}
\newcommand{\Lh}{L^{h}}
\newcommand{\Omegain}{\Omega_{\mathrm{in}}}
\newcommand{\Gammab}{\Gamma_{\mathrm{b}}}
\DeclarePairedDelimiter{\abs}{\lvert}{\rvert}

\newcommand{\supp}{\operatorname{supp}}

\newtheorem{theorem}{Theorem}[section]
\newtheorem{proposition}[theorem]{Proposition}

\theoremstyle{definition}
\newtheorem{definition}[theorem]{Definition}

\theoremstyle{remark}
\newtheorem{remark}[theorem]{Remark}

\title{\finalrev{Infinite-lattice discrete Calder\'on projection
via the lattice Green's function for active noise shielding and
confinement}}

\author{%
Qing Xia\thanks{Corresponding author:
\href{mailto:qxia@kean.edu}{qxia@kean.edu}}\\[0.4em]
\small Department of Mathematics, Wenzhou Kean University,\\
\small Wenzhou 325060, China\\[0.2em]
}

\date{}

\begin{document}

\maketitle

\begin{abstract}
\finalrev{We construct an infinite-lattice discrete Calder\'on projection for the
Helmholtz equation by convolution with the lattice Green's function
(LGF), and apply it to active noise shielding and confinement on
Cartesian grids with arbitrary geometry. The LGF fixes the outgoing
radiation condition and removes the geometry-dependent auxiliary
Helmholtz problem and artificial outer boundary from the projection and
control synthesis; its finite numerical tabulation depends only on
$(h,k)$ and is reusable across geometries. We prove idempotence,
characterize the range as the trace space of interior
lattice-Helmholtz solutions, and establish range equivalence with a
well-posed Tsynkov-type projection. The two projectors coincide as
operators when the auxiliary problem reproduces the exact lattice
radiation condition. A capacity-matrix realization yields closed-form
shielding and confinement densities supported on the exterior and
interior sublayers of a single lattice boundary strip, respectively.
For pure-noise shielding, exterior-sublayer measurements suffice under
explicit invertibility assumptions; preservation of an unknown wanted
interior field requires the full strip trace. Experiments on circular,
L-shaped, and star-shaped regions verify machine-precision cancellation
for LGF-consistent sources and near-second-order convergence for
analytic plane waves and point sources. Conditioning and measurement
noise tests quantify the configuration dependence of the reconstruction.}
\end{abstract}

\medskip
\noindent\textbf{Keywords:}
active noise control;
active shielding;
Helmholtz equation;
lattice Green's function;
discrete Calder\'on projection;
\rev{one-sided measurement}

\medskip

\section{Introduction}\label{sec:intro}

\begin{finalrevision}
Active noise control (ANC) suppresses acoustic disturbances by radiating
a secondary field that cancels the adverse field in a prescribed region.
Since Lueg's 1936 patent~\cite{Lueg1936}, ANC has developed from
single-point adaptive filtering to spatial wave-domain control and
multi-zone sound reproduction~\cite{KuoMorgan1999,Elliott2001,
ZhangAbhayapalaZhangSamarasingheJiang2018,
BetlehemZhangPolettiAbhayapala2015}.  The central challenge for extended
regions is to construct a physically outgoing control field that acts
through boundary data while remaining insensitive to the unknown source
configuration.
\end{finalrevision}

A complementary mathematical approach was initiated by
Lon\v{c}ari\'c, Ryaben'kii, and Tsynkov~\cite{LoncaricRyabenkiiTsynkov2001},
who reformulated ANC through Calder\'on boundary projections and
generalized surface potentials.  In this framework, the total
time-harmonic field decomposes into outgoing and ingoing components
relative to the protected region $\Omegain$, and the controls are
obtained from the trace of the total field alone on
$\partial\Omegain$, without prior knowledge of the noise sources, the
wanted interior sources, or the medium parameters outside a
neighbourhood of $\partial\Omegain$.  The discrete realization through
the method of difference potentials was developed in
\cite{Tsynkov2003,Ryabenkii2002}, extended to composite domains in
\cite{PetersenTsynkov2007,RyabenkiiTsynkovUtyuzhnikov2007}, optimised
in \cite{LoncaricTsynkov2003,LoncaricTsynkov2005}, validated
experimentally on ducts in \cite{LimEtAl2009AIAA,LimEtAl2011JASA},
and carried to three dimensions in
\cite{LimEtAl2014JASA,NtumyUtyuzhnikov2014,NtumyUtyuzhnikov2015}.
The nonlocal active sound control (NASC) scheme of Utyuzhnikov,
Hu, and Zhou~\cite{Utyuzhnikov2017,ZhouUtyuzhnikov2020,
HuUtyuzhnikov2022,HuUtyuzhnikov2024} places two concentric Huygens
surfaces around the protected region and is the most advanced
practical realization of this programme to date.

In all these works, the discrete Calder\'on projection
$\Ph_{\gamma}$ is constructed on a finite auxiliary domain
$\Omega_{0}\supset\Omegain$ by inverting the Helmholtz operator on
$\Omega_{0}$ with a chosen outer boundary condition.  This paper
presents an alternative construction that \emph{replaces the finite
auxiliary inverse by convolution with the Helmholtz lattice Green's
function} (LGF) on the infinite lattice $h\mathbb{Z}^{d}$.
\begin{finalrevision}
The resulting projection is infinite-lattice: no artificial outer
boundary enters the discrete wave representation.  Its implementation
uses the finite LGF tabulation of~\cite{WangXia2025}, which depends only
on $(h,k)$ and is independent of the protected geometry.
\end{finalrevision}
The framework handles both \emph{shielding} (cancelling exterior noise
inside a protected region) and its dual, \emph{confinement}
(cancelling interior noise outside), through complementary
projections.  Our main contributions are as follows.

\begin{enumerate}[leftmargin=1.8em]
\item \textbf{Infinite-lattice projection and trace space.}
\finalrev{We define $\Ph_{\gamma}$ by LGF convolution, prove that it is a
projection, and characterize its range as the trace space of interior
lattice-Helmholtz solutions.  This range agrees with that of any
well-posed Tsynkov-type construction, while the LGF selects the outgoing
complement directly through the lattice radiation condition.}

\item \textbf{Capacity-matrix control formulas.}
\finalrev{A single dense solve on the lattice boundary strip separates
the two Calder\'on components.  The exterior-sublayer density cancels
adverse sound inside the protected region while preserving wanted
interior sound; the interior-sublayer density gives the dual confinement
control and preserves an ambient exterior field.}

\item \textbf{Conditional one-sided sensing.}
\finalrev{For pure-noise shielding, measurements on $\gamma^{-}$ determine
the missing interior trace when the exterior single-layer matrix and the
interior Dirichlet map are invertible.  An unknown wanted interior field
requires the full two-sided trace or prior subtraction of its known
contribution.}

\item \textbf{Geometry-independent discretization and validation.}
\finalrev{Level-set classification extracts the boundary strip without
body fitting.  Two-dimensional tests on circular, reentrant, and
non-convex regions verify the exact discrete identities, near-second-order
accuracy for analytic fields, and the effects of conditioning and
measurement noise.  The theory itself applies in $d\in\{2,3\}$.}
\end{enumerate}


The use of lattice Green's functions for Helmholtz problems has a
substantial history.  Bamberger, Guillot, and Joly~\cite{BambergerGuillotJoly1988}
pioneered the numerical treatment of diffraction by a uniform grid
using discrete Fourier techniques.  Martin~\cite{Martin2006} developed
a discrete scattering theory and derived the Green's function for a
square lattice via contour integration.  Bhat and Osting~\cite{BhatOsting2010}
extended this to diffraction problems on the two-dimensional square
lattice with detailed asymptotic analysis.  Poblet-Puig, Valyaev, and
Shanin~\cite{PobletPuigValyaevShanin2015} identified and suppressed
spurious frequencies in discrete scattering through boundary algebraic
and combined field equations, and Poblet-Puig and Shanin~\cite{PobletPuigShanin2018}
applied these techniques to acoustic radiation problems.

The Helmholtz LGF used here is computed by the method of Wang and
Xia~\cite{WangXia2025}, which combines a discrete sine transform with a
Hankel-function boundary correction.  \begin{finalrevision}
The new contribution is the use of this outgoing infinite-lattice kernel
to realize the discrete Calder\'on projection and its active controls
entirely through boundary-strip algebra.  The theory is formulated for
$d\in\{2,3\}$; the numerical study is restricted to two dimensions.
\end{finalrevision}


\finalrev{Section~\ref{sec:prelim} introduces the lattice geometry and
LGF.  Sections~\ref{sec:projection}--\ref{sec:onesided} develop the
projection, control formulas, and one-sided reconstruction;
Sections~\ref{sec:numerics} and~\ref{sec:algorithm} present the numerical
results and computational realization.}

\section{Preliminaries: lattice, Helmholtz LGF, boundary strip}\label{sec:prelim}

\begin{figure}[t]
\centering
\includegraphics[width=0.4\textwidth]{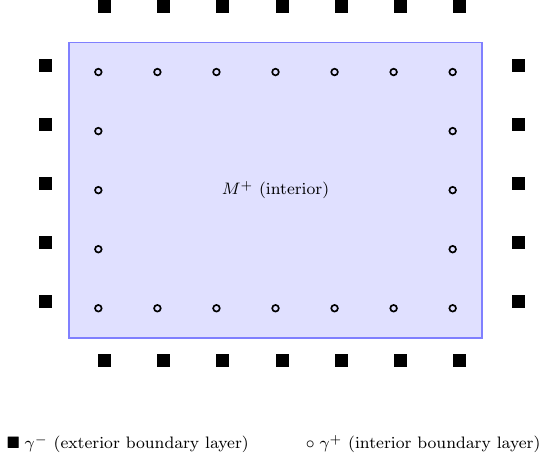}
\caption{Cartesian lattice $\Lambda_h=h\mathbb{Z}^{2}$ with a rectangular
protected region $M^{+}$ (blue).  The boundary strip $\gamma=\gamma^{+}\cup\gamma^{-}$
is the set of lattice nodes whose 5-point stencil straddles the boundary:
$\gamma^{+}$ (orange) lies just inside $M^{+}$, $\gamma^{-}$ (red) lies just
outside.  The shielding control $g^{h}$ is supported on the single exterior
layer $\gamma^{-}$.}
\label{fig:lattice}
\end{figure}

\paragraph{Lattice} Fix $h>0$ and let $\Lambda_{h}=h\Z^{d}$,
${d\in\{2,3\}}$. Grid functions are maps $v^{h}:\Lambda_{h}\to\C$ with
appropriate summability. We denote by $\Lh$ the standard second-order
finite-difference Helmholtz operator at wavenumber $k$,
\begin{equation}
  (\Lh v^{h})_{\bm{m}}
   \;=\;-\frac{1}{h^{2}}\sum_{\abs{\bm{e}}_{1}=1}
        \bigl(v^{h}_{\bm{m}+\bm{e}}-v^{h}_{\bm{m}}\bigr)
        \;-\;k^{2}v^{h}_{\bm{m}},
   \qquad \bm{m}\in\Z^{d}.
   \label{eq:Lh}
\end{equation}

\paragraph{Protected region, lattice subsets, boundary strip}
Let $\Omegain\subset\R^{d}$ be a bounded Lipschitz domain with
$\Gammab=\partial\Omegain$, not assumed to align with $\Lambda_{h}$.
Define
\(
M^{+}=\{\bm{m}\in\Z^{d}:\,h\bm{m}\in\Omegain\},\;
M^{-}=\{\bm{m}\in\Z^{d}:\,h\bm{m}\notin\overline{\Omegain}\}.
\)
The five- or seven-point stencil sweep of $\Lh$ on $M^{\pm}$ produces
extended sets $N^{\pm}$, and the lattice boundary strip is
\(
\gamma=N^{+}\cap N^{-},
\)
split into $\gamma^{+}=\gamma\cap M^{+}$ and
$\gamma^{-}=\gamma\cap M^{-}$~\cite{Tsynkov2003,WangXia2025}. Functions
supported on $\gamma$ play the role of boundary-strip densities and
form the finite-dimensional trace space relevant to ANC.

\paragraph{Helmholtz LGF} The Helmholtz LGF $\Gh$ on $\Lambda_{h}$ is
the tempered fundamental solution of $\Lh$ selected by the Sommerfeld
radiation condition,
\begin{equation}
  \Gh_{\bm{m}}\;=\;\frac{1}{(2\pi)^{d}}\int_{[-\pi,\pi]^{d}}
       \frac{e^{i\,\bm{m}\cdot\bm{\theta}}}
            {\sigma(\bm{\theta})-k^{2}h^{2}+i0^{+}}\,d\bm{\theta},
  \qquad
  \sigma(\bm{\theta})\;=\;\sum_{j=1}^{d}2(1-\cos\theta_{j}).
  \label{eq:LGF}
\end{equation}
Efficient evaluation of $\Gh$ is provided in~\cite{WangXia2025}; for the
classical theory of discrete scattering and lattice Green's functions
see~\cite{BambergerGuillotJoly1988,Martin2006,BhatOsting2010}.
\begin{revision}
With the normalization in~\eqref{eq:LGF},
$\Lh\Gh=h^{-2}\delta_{\bm 0}$.  We therefore define the lattice
convolution by
\begin{equation}
  (\Gh\ast v^{h})_{\bm m}
  :=h^{2}\sum_{\bm n\in\Z^{d}}
       \Gh_{\bm m-\bm n}v^{h}_{\bm n}.
  \label{eq:convolution}
\end{equation}
For compactly supported lattice functions this convention gives
\begin{equation}
  \Lh(\Gh\ast v^{h})=v^{h},
  \label{eq:Ginv}
\end{equation}
and selects the outgoing solution.  The sum is finite for compactly
supported $v^{h}$.
\end{revision}

\paragraph{Nonresonance assumption} Throughout, we assume that
$k^{2}$ is not a Dirichlet eigenvalue of the discrete Laplacian on
$M^{+}$ with homogeneous data on $\gamma$.  Equivalently, the
interior lattice Helmholtz problem $L^{h}u^{h}=0$ on $M^{+}$ with
$u^{h}|_{\gamma}=0$ admits only the trivial solution.  This guarantees
unique solvability of the interior Dirichlet problem and well-posedness
of the Calder\'on projection.
\begin{newrevision}
In computation this condition is not binary in a robust sense: proximity
of $k^{2}$ to the discrete Dirichlet spectrum appears as a small singular
value of the relevant capacity or Dirichlet map and can substantially
amplify perturbations.  We therefore report condition numbers under grid
refinement in Section~\ref{sec:conditioning}; because both the difference
operator and the staircase geometry change with $h$, closeness to a
discrete resonance need not vary monotonically.
\end{newrevision}

\paragraph{Boundary-strip classification} Figure~\ref{fig:lattice}
illustrates the setup.  The sets $M^{\pm}$ and the
boundary strip $\gamma=\gamma^{+}\cup\gamma^{-}$ are computed from
a level-set function $\phi$ describing $\Omegain$ by a single 5-point
stencil sweep over the lattice.  No cut-cell interpolation,
body-fitting, or surface meshing is required; the protected region
geometry enters the method only through this classification step.

\section{LGF-based discrete Calder\'on projection}\label{sec:projection}

\begin{figure}[t]
\centering
\includegraphics[width=0.85\textwidth]{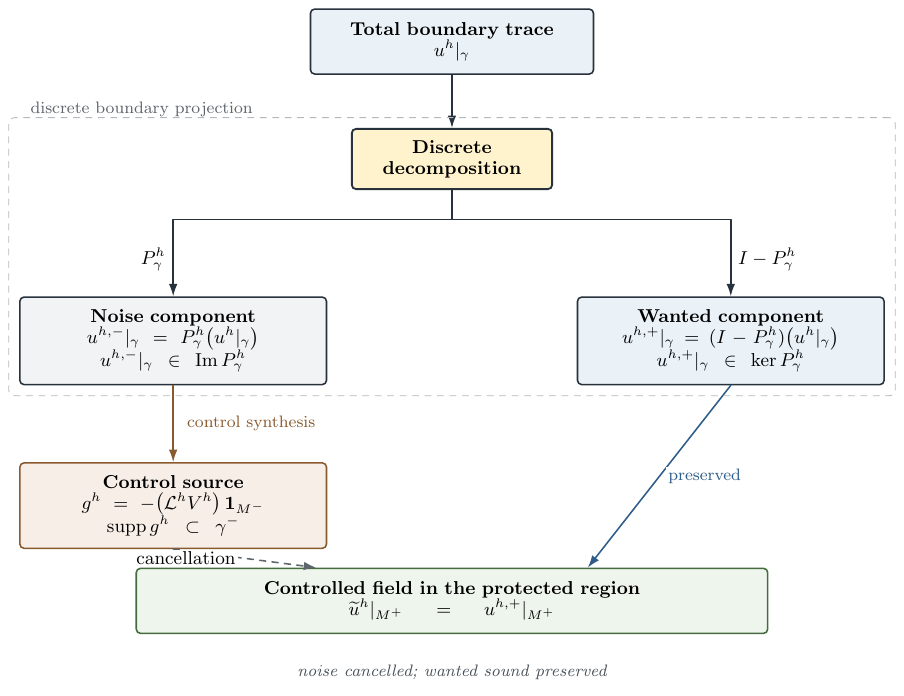}
\caption{Conceptual structure of the LGF Calder\'on projection for ANC
shielding.  The total-field trace on $\gamma$ is decomposed by $P^h_\gamma$
into a noise component $u^{h,-}|_\gamma\in\operatorname{Im}P^h_\gamma$ and
a wanted component $u^{h,+}|_\gamma\in\ker P^h_\gamma$.  The closed-form
control $g^h$ is supported on the single exterior layer $\gamma^-$ and
\rev{cancels the noise on $M^{+}$ while leaving the wanted field unaltered.}}
\label{fig:projection}
\end{figure}

\subsection{Definition}

Figure~\ref{fig:projection} sketches the conceptual structure of the
LGF projection and the resulting shielding control.

Let $\xi_{\gamma}:\gamma\to\C$ be a grid density.
\begin{revision}
We call $V^{h}:\Lambda_{h}\to\C$ an admissible extension of
$\xi_{\gamma}$ if $V^{h}|_{\gamma}=\xi_{\gamma}$,
$\Lh V^{h}$ has finite support, and either $V^{h}$ is compactly
supported or it is the outgoing solution generated by $\Lh V^{h}$.
For either class, uniqueness of the outgoing lattice problem gives
$\Gh\ast\Lh V^{h}=V^{h}$.  Compactly supported extensions always
exist; the outgoing class includes the single-layer realization used
in Theorem~\ref{prop:capacity}.
\end{revision}
For any admissible extension define
\begin{equation}
   \Ph\,\xi_{\gamma}
   \;:=\;
   V^{h}\;-\;\Gh\ast\bigl[(\Lh V^{h})\,\indicator{M^{+}}\bigr]
   \;=\;\Gh\ast\bigl[(\Lh V^{h})\,\indicator{M^{-}}\bigr],
   \label{eq:Pdef}
\end{equation}
and let
$\Ph_{\gamma}\xi_{\gamma}:=(\Ph\xi_{\gamma})|_{\gamma}$.

\begin{remark}[Comparison with~\cite{Tsynkov2003}]\label{rem:tsynkov}
The formula~\eqref{eq:Pdef} replaces the volumetric inverse
$(\Lh_{\Omega_{0}})^{-1}$ of~\cite{Tsynkov2003} by convolution with
$\Gh$. \begin{finalrevision}
Because $\Gh$ is finite at the origin and already satisfies the lattice
radiation condition, the projection requires neither singularity
subtraction nor a geometry-dependent artificial outer boundary.  The
finite tabulation used in computation approximates this infinite-lattice
kernel.
\end{finalrevision}
\end{remark}

\subsection{Main properties}

\begin{theorem}[LGF discrete Calder\'on
projection]\label{thm:projection}
Assume the nonresonance condition of Section~\ref{sec:prelim}
(the interior lattice Dirichlet problem on $M^{+}$ is uniquely
solvable). Then:
\begin{enumerate}[label=(\roman*)]
\item $\Ph_{\gamma}$ is independent of the chosen extension $V^{h}$.
\item $\Ph_{\gamma}$ is idempotent: $(\Ph_{\gamma})^{2}=\Ph_{\gamma}$.
\item $\xi_{\gamma}\in\operatorname{Im}\Ph_{\gamma}$ if and only if
there exists $u^{h}:N^{+}\to\C$ with $\Lh u^{h}=0$ on $M^{+}$ and
$u^{h}|_{\gamma}=\xi_{\gamma}$; in that case $\Ph\xi_{\gamma}|_{N^{+}}
=u^{h}$.
\end{enumerate}
\end{theorem}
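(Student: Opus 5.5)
The plan rests on one structural fact. The values of $\Lh W$ on $M^{+}$ depend only on $W|_{N^{+}}$, and on $M^{-}$ only on $W|_{N^{-}}$. Moreover $\Lh W=0$ on $M^{+}$ whenever $W$ is supported in $N^{-}\setminus\gamma = M^{-}\setminus\gamma^{-}$, since the stencil of a point of $M^{+}$ lies in $N^{+}$. Symmetrically, $\Lh W=0$ on $M^{-}$ whenever $W$ is supported in $M^{+}\setminus\gamma^{+}$. Every lattice point lies in $M^{+}$ or $M^{-}$; lattice points on $\Gammab$ are non-generic and I will assume them absent, or assign them to one side. Hence $\indicator{M^{+}}+\indicator{M^{-}}=1$, and the two expressions in \eqref{eq:Pdef} agree because $\Gh\ast\Lh V^{h}=V^{h}$ for admissible extensions.

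For (i), I take two admissible extensions $V_1,V_2$ of $\xi_\gamma$ and set $W=V_1-V_2$. Then $W|_{\gamma}=0$, $\Lh W$ has finite support, and $\Gh\ast\Lh W=W$. I must show $\bigl(\Gh\ast[(\Lh W)\indicator{M^{-}}]\bigr)|_{\gamma}=0$. I split $W=W\indicator{M^{+}}+W\indicator{M^{-}}$. Because $W$ vanishes on $\gamma$, the piece $W\indicator{M^{-}}$ is supported in $M^{-}\setminus\gamma^{-}$, so $\Lh(W\indicator{M^{-}})=0$ on $M^{+}$. Likewise $\Lh(W\indicator{M^{+}})=0$ on $M^{-}$. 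It follows that $(\Lh W)\indicator{M^{-}}=\Lh(W\indicator{M^{-}})$.

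The main obstacle is the case where $W$ is outgoing rather than compactly supported, because then I cannot directly claim $\Gh\ast\Lh(W\indicator{M^{-}})=W\indicator{M^{-}}$. I will handle it as follows. The function $W\indicator{M^{+}}$ is finitely supported since $M^{+}$ is bounded. Therefore $W\indicator{M^{-}}=W-W\indicator{M^{+}}$ is outgoing, being the difference of an outgoing function and a compactly supported one, and its image under $\Lh$ is finitely supported. The uniqueness of the outgoing lattice problem, already invoked in the definition of admissibility, then gives $\Gh\ast\Lh(W\indicator{M^{-}})=W\indicator{M^{-}}$, and this vanishes on $\gamma$. So $\Ph_\gamma$ is well defined. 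As a by-product, $\Ph\xi_\gamma$ itself is independent of the extension on $N^{+}$, and in particular on all of $M^{+}$ together with $\gamma$, because $W\indicator{M^{-}}$ vanishes on $N^{+}$.

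For (iii), I first show that the range consists of interior solutions. The function $U=\Ph\xi_\gamma=\Gh\ast[(\Lh V)\indicator{M^{-}}]$ satisfies $\Lh U=(\Lh V)\indicator{M^{-}}$ by \eqref{eq:Ginv}, which vanishes on $M^{+}$. So $u:=U|_{N^{+}}$ solves $\Lh u=0$ on $M^{+}$ with trace $\Ph_\gamma\xi_\gamma$.

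Conversely, suppose $u$ on $N^{+}$ satisfies $\Lh u=0$ on $M^{+}$ and $u|_\gamma=\xi_\gamma$. I choose the compactly supported extension $V=u$ on $N^{+}$, $V=0$ elsewhere. Then $\Lh V$ vanishes on $M^{+}$, so $\Ph\xi_\gamma=V-\Gh\ast 0=V$. This gives $\Ph\xi_\gamma|_{N^{+}}=u$ and $\Ph_\gamma\xi_\gamma=\xi_\gamma$, so $\xi_\gamma\in\operatorname{Im}\Ph_\gamma$. For a general $\xi_\gamma$ in the image, the identity $\Ph\xi_\gamma|_{N^{+}}=u$ follows from this computation once $u$ is known to be unique. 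Uniqueness holds because $u$ is determined by its trace, by the nonresonance assumption: two such $u$ differ by a solution of the homogeneous interior Dirichlet problem on $M^{+}$ with zero data on $\gamma$.

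For (ii), let $\eta=\Ph_\gamma\xi_\gamma$. By the first half of (iii), $\eta$ is the trace of an interior solution $u=\Ph\xi_\gamma|_{N^{+}}$. The converse half then gives $\Ph_\gamma\eta=\eta$, so $(\Ph_\gamma)^2=\Ph_\gamma$. Nonresonance is used only for the uniqueness statement in (iii); idempotence follows without it.
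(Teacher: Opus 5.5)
Your proposal is correct and follows essentially the paper's proof: you split $W=W_{+}+W_{-}$, use stencil locality across $\gamma$, and obtain (ii) and (iii) from the zero-extension of an interior solution. The one difference is that the paper computes the difference of potentials with the $\indicator{M^{+}}$ form of \eqref{eq:Pdef}, so $W-\Gh\ast\Lh W_{+}=W_{-}$ holds at once because $W_{+}$ is compactly supported; this avoids the outgoing-uniqueness step you identified as the main obstacle. Your appeal to nonresonance in (iii) is also unnecessary, since your zero-extension computation together with (i) already gives $\Ph\xi_{\gamma}|_{N^{+}}=u^{h}$ for every $u^{h}$ with $\Lh u^{h}=0$ on $M^{+}$ and trace $\xi_{\gamma}$; the paper's proof likewise never uses that assumption.
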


\begin{revision}
\begin{proof}
Let $V^{h}_{1}$ and $V^{h}_{2}$ be admissible extensions of the same
trace and set $W^{h}=V^{h}_{1}-V^{h}_{2}$.  Write
$W^{h}=W^{h}_{+}+W^{h}_{-}$, where
$W^{h}_{\pm}=W^{h}\indicator{M^{\pm}}$.  Since
$W^{h}|_{\gamma}=0$, the stencil coupling across the boundary gives
\[
  (\Lh W^{h})\indicator{M^{+}}=\Lh W^{h}_{+}.
\]
Hence the difference between the two potentials defined by
\eqref{eq:Pdef} is
\[
  W^{h}-\Gh\ast\bigl[(\Lh W^{h})\indicator{M^{+}}\bigr]
  =W^{h}-\Gh\ast\Lh W^{h}_{+}=W^{h}_{-}.
\]
\newrev{The function $W^{h}_{-}$ vanishes on $N^{+}$: away from the
strip it is supported in $M^{-}$, while on the exterior strip nodes
$\gamma^{-}\subset N^{+}$ it vanishes because $W^{h}|_{\gamma}=0$.}  Thus
$\Ph\xi_{\gamma}|_{N^{+}}$, and in particular its trace on $\gamma$,
is independent of the extension.  This proves~(i).

For~(ii), let $u^{h}=\Ph\xi_{\gamma}|_{N^{+}}$ and
$\eta_{\gamma}=u^{h}|_{\gamma}=\Ph_{\gamma}\xi_{\gamma}$.
Applying $\Lh$ to~\eqref{eq:Pdef} shows that
$\Lh u^{h}=0$ on $M^{+}$.  Choose a compactly supported extension
$U^{h}$ of $u^{h}$ that agrees with it on $N^{+}$.  Then
$(\Lh U^{h})\indicator{M^{+}}=0$, and therefore
\[
  \Ph\eta_{\gamma}|_{N^{+}}=U^{h}|_{N^{+}}=u^{h}.
\]
Taking the trace on $\gamma$ yields
$\Ph_{\gamma}\eta_{\gamma}=\eta_{\gamma}$, which is
$(\Ph_{\gamma})^{2}=\Ph_{\gamma}$.

For~(iii), if $\xi_{\gamma}\in\operatorname{Im}\Ph_{\gamma}$,
write $\xi_{\gamma}=\Ph_{\gamma}\mu_{\gamma}$.  The function
$u^{h}=\Ph\mu_{\gamma}|_{N^{+}}$ is lattice Helmholtz on $M^{+}$ and
has trace $\xi_{\gamma}$.  Conversely, suppose that
$\Lh u^{h}=0$ on $M^{+}$ and $u^{h}|_{\gamma}=\xi_{\gamma}$.
Choose a compactly supported extension $U^{h}$ agreeing with $u^{h}$
on $N^{+}$.  Since
$(\Lh U^{h})\indicator{M^{+}}=0$, equation~\eqref{eq:Pdef} gives
$\Ph\xi_{\gamma}|_{N^{+}}=u^{h}$ and hence
$\Ph_{\gamma}\xi_{\gamma}=\xi_{\gamma}$.  Thus
$\xi_{\gamma}\in\operatorname{Im}\Ph_{\gamma}$.
\end{proof}
\end{revision}

\subsection{Comparison with the Tsynkov projection}

\begin{finalrevision}
The LGF and Tsynkov constructions identify the same canonical range:
the traces of interior lattice-Helmholtz solutions on $N^{+}$.  Their
complementary exterior trace spaces depend on the radiation treatment;
the LGF encodes the infinite-lattice outgoing condition, whereas a
bounded auxiliary problem encodes its chosen outer boundary operator.
\end{finalrevision}

\begin{theorem}[Range equivalence]\label{thm:equivalence}
Under the hypothesis of Theorem~\ref{thm:projection},
$\operatorname{Im}\Ph_{\gamma}$ $=$
$\operatorname{Im}\Ph_{\gamma,\Omega_{0}}$
for any Tsynkov-type discrete Calder\'on projection
$\Ph_{\gamma,\Omega_{0}}$ constructed from a well-posed
auxiliary problem on a bounded lattice-aligned domain
$\Omega_{0}\supset\overline{\Omegain}$.
Both projections therefore characterize the same space of
lattice-Helmholtz traces on $\gamma$.
\end{theorem}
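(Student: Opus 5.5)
The plan is to show that both ranges coincide with the same intrinsic space
\[
  \mathcal{T}_{\gamma}:=\{u^{h}|_{\gamma}:\ u^{h}:N^{+}\to\C,\ \Lh u^{h}=0 \text{ on } M^{+}\}.
\]
For $\Ph_{\gamma}$ this is exactly Theorem~\ref{thm:projection}(iii). It therefore remains to prove $\operatorname{Im}\Ph_{\gamma,\Omega_{0}}=\mathcal{T}_{\gamma}$, and the argument should use only the structure common to every Tsynkov-type construction.

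First we fix what a Tsynkov-type projection is. Let $N_{0}$ denote the lattice nodes of $\Omega_{0}$, and let $G_{0}$ be the inverse of $\Lh$ on $N_{0}$ with the chosen outer boundary operator. Well-posedness means that $G_{0}$ exists and satisfies $\Lh G_{0}f=f$ on the interior nodes of $\Omega_{0}$. The projection is then
\[
  \Ph_{\Omega_{0}}\xi_{\gamma}=V^{h}-G_{0}\bigl[(\Lh V^{h})\indicator{M^{+}}\bigr],
\]
where $V^{h}$ is any extension of $\xi_{\gamma}$ supported compactly in the interior of $\Omega_{0}$ and satisfying the auxiliary boundary condition; for example, we may take $V^{h}$ vanishing near $\partial\Omega_{0}$. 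The projection on the strip is $\Ph_{\gamma,\Omega_{0}}\xi_{\gamma}=(\Ph_{\Omega_{0}}\xi_{\gamma})|_{\gamma}$. Because $\Omega_{0}\supset\overline{\Omegain}$ and $\Omega_{0}$ is bounded and lattice-aligned, we may assume $N^{+}$ lies in the interior of $N_{0}$. If necessary we enlarge the hypothesis slightly to guarantee this, since $\gamma^{-}$ lies within one grid step of $\Gammab$.

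Next we prove $\operatorname{Im}\Ph_{\gamma,\Omega_{0}}\subseteq\mathcal{T}_{\gamma}$. We apply $\Lh$ to the formula for $\Ph_{\Omega_{0}}$ on $M^{+}$:
\[
  \Lh\Ph_{\Omega_{0}}\xi_{\gamma}=\Lh V^{h}-(\Lh V^{h})\indicator{M^{+}}=0 \quad\text{on } M^{+}.
\]
Hence $u^{h}=\Ph_{\Omega_{0}}\xi_{\gamma}|_{N^{+}}$ is an interior solution whose trace is $\Ph_{\gamma,\Omega_{0}}\xi_{\gamma}$.

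Then we prove the reverse inclusion $\mathcal{T}_{\gamma}\subseteq\operatorname{Im}\Ph_{\gamma,\Omega_{0}}$. Take $u^{h}$ with $\Lh u^{h}=0$ on $M^{+}$ and trace $\xi_{\gamma}$. Extend it to a $U^{h}$ that agrees with $u^{h}$ on $N^{+}$ and vanishes near $\partial\Omega_{0}$; this is possible since $N^{+}$ is interior to $N_{0}$. Then $(\Lh U^{h})\indicator{M^{+}}=0$, so $\Ph_{\Omega_{0}}\xi_{\gamma}=U^{h}$, and therefore $\Ph_{\gamma,\Omega_{0}}\xi_{\gamma}=\xi_{\gamma}$. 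This mirrors the proof of Theorem~\ref{thm:projection}(iii), with $\Gh$ replaced by $G_{0}$. The only property used is that $G_{0}\Lh W=W$ for $W$ supported away from $\partial\Omega_{0}$, and this follows from uniqueness for the well-posed auxiliary problem.

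For completeness we also note extension-independence of the Tsynkov projection. We repeat the argument of Theorem~\ref{thm:projection}(i): with $W=V_{1}-V_{2}$, we have $(\Lh W)\indicator{M^{+}}=\Lh W_{+}$ and $G_{0}\Lh W_{+}=W_{+}$. This is again justified by uniqueness, since $W_{+}$ is supported in $M^{+}$, which is interior to $\Omega_{0}$.

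I expect the main obstacle to be this bookkeeping at the auxiliary boundary. We must make precise that admissible extensions are those compatible with the outer boundary operator, so that $G_{0}\Lh W=W$ holds. We must also check that $N^{+}\subset\operatorname{int}N_{0}$ follows from $\Omega_{0}\supset\overline{\Omegain}$ for a lattice-aligned $\Omega_{0}$. I would handle both points by restricting attention to extensions supported in $M^{+}\cup\gamma$ together with a layer one step outside.

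Finally, the nonresonance hypothesis is not needed for either inclusion. It only guarantees that both projectors are well defined and that $\mathcal{T}_{\gamma}$ is parametrized by its Dirichlet data on $\gamma^{-}$, consistent with the stated hypothesis.
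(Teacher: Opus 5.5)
Your proposal is correct and follows the paper's route: both ranges are identified with the trace space of interior lattice-Helmholtz solutions on $N^{+}$, with Theorem~\ref{thm:projection}(iii) supplying this characterization for the LGF side. The difference is on the auxiliary side, where the paper simply cites the defining property of Tsynkov's projection. You instead re-derive that property by rerunning the proofs of Theorem~\ref{thm:projection}(i) and (iii) with $G_{0}$ in place of $\Gh$, and you state explicitly the requirement $N^{+}\subset\operatorname{int}N_{0}$, which the paper leaves implicit.
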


\begin{proof}
By Theorem~\ref{thm:projection}(iii),
$\operatorname{Im}\Ph_{\gamma}$ consists of traces on $\gamma$ of
lattice-Helmholtz solutions $u^{h}$ satisfying $L^{h}u^{h}=0$ on
$M^{+}$.  The defining property of any Tsynkov-type projection
\cite{Tsynkov2003} implies the same characterization for
$\operatorname{Im}\Ph_{\gamma,\Omega_{0}}$.  The two sets are
therefore equal.
\end{proof}

\begin{revision}
\begin{remark}
If $\Omega_{0}$ is equipped with an exact discrete transparent
boundary condition that reproduces the lattice Green's function on
$\partial M_{0}$, then the two projections coincide as operators,
$\Ph_{\gamma,\Omega_{0}}=\Ph_{\gamma}$.  With an approximate outer
condition, Theorem~\ref{thm:equivalence} asserts equality of the ranges
only.  The projectors may have different kernels and may therefore act
differently on a general trace; equality of the resulting control
fields requires an additional consistency or error analysis.
\label{rem:tsynkov-range}
\end{remark}
\end{revision}

\section{Interior cancellation from the total-field trace}
\label{sec:controls}

\subsection{Setting}

Consider the continuous Helmholtz equation on $\R^{d}$ with
source $f=f^{+}+f^{-}$ supported respectively inside and outside
$\overline{\Omegain}$, and the outgoing Sommerfeld radiation
condition. Here $f^{+}$ is the \emph{wanted} interior sound and
$f^{-}$ is the \emph{adverse} exterior noise. Let $u^{\pm}$ be the
outgoing solutions of $Lu^{\pm}=f^{\pm}$; then $u=u^{+}+u^{-}$.

Discretize on $\Lambda_{h}$ with localized source vectors
$f^{h,\pm}$ on $M^{\pm}$. The discrete ANC problem asks for a control
$g^{h}$ with $\supp g^{h}\subseteq M^{-}$ such that the solution
$\widetilde{u}^{h}$ of
\begin{equation}
   \Lh\widetilde{u}^{h}\;=\;f^{h,+}+f^{h,-}+g^{h}\qquad\text{on }
   \Lambda_{h},
   \label{eq:Luh}
\end{equation}
with outgoing LGF tails, satisfies $\widetilde{u}^{h}|_{N^{+}}
=u^{h,+}|_{N^{+}}$, where $u^{h,+}=\Gh\ast f^{h,+}$.

\subsection{Kernel of the projection}

\begin{definition}[Exterior LGF projection]\label{def:Qh}
For $\xi_{\gamma}:\gamma\to\C$, pick any admissible extension
$V^{h}$ with $V^{h}|_{\gamma}=\xi_{\gamma}$ and set
$\Qh\xi_{\gamma}:=V^{h}-\Gh\ast[(\Lh V^{h})\indicator{M^{-}}]$,
$\Qh_{\gamma}\xi_{\gamma}:=(\Qh\xi_{\gamma})|_{\gamma}$.
\end{definition}

\begin{proposition}[Complementarity]\label{prop:Qh}
Under the hypothesis of Theorem~\ref{thm:projection} (and the outgoing
character of $\Gh$ for the exterior side), the operator $\Qh_{\gamma}$
is a projection onto the trace space of outgoing
lattice-Helmholtz solutions on $M^{-}\cup\gamma$, and
\begin{equation}
   \Ph_{\gamma}+\Qh_{\gamma}\;=\;\mathrm{I}_{\gamma}.
   \label{eq:complementary}
\end{equation}
\end{proposition}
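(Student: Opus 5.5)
The plan is to get the complementarity identity \eqref{eq:complementary} essentially for free from the definitions, and then to deduce the projection property and the range of $\Qh_{\gamma}$ from Theorem~\ref{thm:projection}. Fix $\xi_{\gamma}$ and an admissible extension $V^{h}$. Every lattice node lies in exactly one of $M^{+}$, $M^{-}$, or on $h^{-1}\partial\Omegain$, where $h^{-1}\partial\Omegain=\{\bm m\in\Z^{d}:\,h\bm m\in\Gammab\}$. First I will treat the generic case $\indicator{M^{+}}+\indicator{M^{-}}=1$. (If some nodes lie exactly on $\Gammab$, I will assign them to $M^{-}$ by convention, as the definition of $M^{-}$ via $\overline{\Omegain}$ suggests one should.)

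Adding \eqref{eq:Pdef} to Definition~\ref{def:Qh} then gives
\[
\Ph\xi_{\gamma}+\Qh\xi_{\gamma}
=2V^{h}-\Gh\ast\Lh V^{h}=V^{h},
\]
because admissibility gives $\Gh\ast\Lh V^{h}=V^{h}$. Restricting to $\gamma$ yields $\Ph_{\gamma}+\Qh_{\gamma}=\mathrm{I}_{\gamma}$. The same identity shows that $\Qh_{\gamma}=\mathrm{I}_{\gamma}-\Ph_{\gamma}$ is independent of the extension, by Theorem~\ref{thm:projection}(i). Idempotence follows algebraically from Theorem~\ref{thm:projection}(ii):
\[
(\mathrm{I}-\Ph_{\gamma})^{2}=\mathrm{I}-2\Ph_{\gamma}+\Ph_{\gamma}=\mathrm{I}-\Ph_{\gamma}.
\]
Consequently $\operatorname{Im}\Qh_{\gamma}=\ker\Ph_{\gamma}$.

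The remaining, substantive step is to identify $\ker\Ph_{\gamma}$ with the traces of outgoing solutions $w^{h}$ of $\Lh w^{h}=0$ on $M^{-}$. These are functions on the whole exterior whose source lies in $M^{+}$ and is finitely supported, so that $w^{h}=\Gh\ast\rho$ with $\supp\rho\subseteq M^{+}$ on $M^{-}\cup\gamma$.

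For the forward inclusion, I will use $\Qh\xi_{\gamma}=\Gh\ast[(\Lh V^{h})\indicator{M^{+}}]$, which follows from the identity above. This is an outgoing LGF potential with source in $M^{+}$, so it is lattice-Helmholtz on $M^{-}$ and its trace is $\Qh_{\gamma}\xi_{\gamma}$.

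For the converse, suppose $w^{h}=\Gh\ast\rho$ with $\supp\rho\subseteq M^{+}$. I take $V^{h}=w^{h}$ itself as the admissible extension: it is outgoing and $\Lh V^{h}=\rho$ has finite support. Then \eqref{eq:Pdef} gives $\Ph\xi_{\gamma}=\Gh\ast[\rho\,\indicator{M^{-}}]=0$, so $\xi_{\gamma}\in\ker\Ph_{\gamma}$.

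The main obstacle is a gap in this converse: a general outgoing exterior solution is defined only on $M^{-}\cup\gamma$, whereas the argument needs it in the form $\Gh\ast\rho$. To close the gap, given such a $w$ on $N^{-}$ I will extend it arbitrarily (say by zero) into $M^{+}\setminus\gamma$. This produces a function $W$ that is outgoing at infinity and satisfies $\Lh W=0$ on $M^{-}$, with $\Lh W$ finitely supported in $M^{+}$. The identity $W=\Gh\ast\Lh W$ then follows from uniqueness of the outgoing lattice problem, the same fact the paper invokes in defining admissible extensions. This is exactly where the phrase ``outgoing character of $\Gh$ for the exterior side'' in the hypothesis enters; I would state this uniqueness (absence of nontrivial outgoing lattice solutions of $\Lh W=0$ on all of $\Lambda_h$) explicitly, since $k^{2}h^{2}$ lies inside the band $(0,4d)$.
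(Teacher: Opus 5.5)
Your proof is correct, but it is organized differently from the paper's. The paper's proof simply says to rerun Theorem~\ref{thm:projection} with $M^{+}$ and $M^{-}$ interchanged. That gives extension-independence, idempotence and the range characterization of $\Qh_{\gamma}$ directly. It then adds \eqref{eq:Pdef} to the $\Qh$ formula to get \eqref{eq:complementary}.

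You share the last step. After that, you obtain independence and idempotence of $\Qh_{\gamma}=\mathrm{I}_{\gamma}-\Ph_{\gamma}$ purely algebraically from Theorem~\ref{thm:projection}(i)--(ii). You then reduce the range statement to identifying $\ker\Ph_{\gamma}$ with the exterior trace space.

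\textbf{What your route gains.} It isolates the one step that is not symmetric under $M^{+}\leftrightarrow M^{-}$. The interior proof chooses a \emph{compactly supported} $U^{h}$ agreeing with $u^{h}$ on the finite set $N^{+}$. That is impossible on the infinite set $N^{-}$ for a radiating field. So the mirrored argument silently needs the outgoing class of admissible extensions and uniqueness of the outgoing lattice problem. That is exactly your extension of $w^{h}$ into $M^{+}\setminus\gamma$ followed by $W=\Gh\ast\Lh W$.

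\textbf{What the mirrored route gains.} It delivers the field-level identity $\Qh\xi_{\gamma}|_{N^{-}}=w^{h}$, the exterior analogue of Theorem~\ref{thm:projection}(iii), which Theorem~\ref{thm:confinement} later relies on. Your construction gives this at no extra cost, but you should state it explicitly. Taking $V^{h}=W$ yields $\Qh\xi_{\gamma}=W-\Gh\ast[(\Lh W)\indicator{M^{-}}]=W$. Moreover, two admissible extensions $V^{h}_{1},V^{h}_{2}$ change $\Qh\xi_{\gamma}$ only by $(V^{h}_{1}-V^{h}_{2})\indicator{M^{+}}$. This vanishes on $N^{-}=M^{-}\cup\gamma^{+}$ because $V^{h}_{1}-V^{h}_{2}$ vanishes on $\gamma$.

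\textbf{One small correction.} The definition of $M^{-}$ via $\overline{\Omegain}$ \emph{excludes} lattice nodes lying on $\Gammab$. Putting them in $M^{-}$ therefore modifies that definition rather than following it. The paper tacitly makes the same assumption when it uses $\indicator{M^{+}}+\indicator{M^{-}}=\indicator{\Lambda_{h}}$.
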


The proof repeats Theorem~\ref{thm:projection} with $M^{+}$ replaced
by $M^{-}$; \eqref{eq:complementary} follows by adding~\eqref{eq:Pdef}
and the corresponding $\Qh$ formula and using $\Gh\ast\Lh V^{h}=V^{h}$.

\begin{newrevision}
The following theorem is the computational core of the method: after one
dense solve on the boundary strip, the two sublayer densities generate
the shielding and confinement components separately.
\end{newrevision}

\begin{revision}
\begin{theorem}[Capacity-matrix realization]\label{prop:capacity}
Let
\[
  (S_{\gamma\gamma})_{ij}
  =h^{2}\Gh(\gamma_i-\gamma_j),
  \qquad \gamma_i,\gamma_j\in\gamma,
\]
and assume that $S_{\gamma\gamma}$ is nonsingular.  For a trace
$\xi_{\gamma}$, let
$\lambda=S_{\gamma\gamma}^{-1}\xi_{\gamma}$ and extend
$\lambda$ by zero away from $\gamma$.  Set
$V^{h}=\Gh\ast\lambda$ and
$\lambda^{\pm}=\lambda\indicator{\gamma^{\pm}}$.
\newrev{This is the outgoing admissible extension from
Section~\ref{sec:projection}; compact support of $V^{h}$ is not
required.}  Then
$V^{h}|_{\gamma}=\xi_{\gamma}$, $\Lh V^{h}=\lambda$, and
\begin{equation}
  \Ph\xi_{\gamma}=\Gh\ast\lambda^{-},
  \qquad
  \Qh\xi_{\gamma}=\Gh\ast\lambda^{+}.
  \label{eq:capacity-projectors}
\end{equation}
Consequently, the single-layer shielding and confinement controls are
$g^{h}_{\rm sh}=-\lambda^{-}$ and
$g^{h}_{\rm conf}=-\lambda^{+}$, respectively.
\end{theorem}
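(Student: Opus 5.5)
The argument is mostly direct verification from the definitions, and I will take the steps in order.

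\emph{Step 1: the trace and the source of $V^{h}$.} Since $\lambda$ is supported on the finite set $\gamma$, the convolution \eqref{eq:convolution} is a finite sum. For $\gamma_i\in\gamma$,
\[
(\Gh\ast\lambda)_{\gamma_i}=h^{2}\sum_{j}\Gh(\gamma_i-\gamma_j)\lambda_j=(S_{\gamma\gamma}\lambda)_i=\xi_{\gamma,i},
\]
so $V^{h}|_{\gamma}=\xi_{\gamma}$. Equation \eqref{eq:Ginv} applied to the compactly supported $\lambda$ gives $\Lh V^{h}=\lambda$. Hence $\Lh V^{h}$ has finite support, and $V^{h}$ is by construction the outgoing solution it generates. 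Therefore $V^{h}$ is an admissible extension in the sense of Section~\ref{sec:projection}. By Theorem~\ref{thm:projection}(i), this choice computes $\Ph_{\gamma}$ legitimately, and likewise $\Qh_{\gamma}$ through the analogue for Definition~\ref{def:Qh}.

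\emph{Step 2: the projector formulas.} I will use the second form of \eqref{eq:Pdef}, $\Ph\xi_{\gamma}=\Gh\ast[(\Lh V^{h})\indicator{M^{-}}]$. Since $\gamma\subset M^{+}\cup M^{-}$ with $\gamma^{\pm}=\gamma\cap M^{\pm}$, we have $\lambda\indicator{M^{-}}=\lambda^{-}$, so $\Ph\xi_{\gamma}=\Gh\ast\lambda^{-}$.

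Similarly, $\Qh\xi_{\gamma}=V^{h}-\Gh\ast\lambda^{-}=\Gh\ast(\lambda-\lambda^{-})=\Gh\ast\lambda^{+}$. This is consistent with \eqref{eq:complementary}.

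One point needs care: $\gamma$ must be disjoint from $\partial\Omegain$ lattice nodes, i.e.\ $\gamma\subset M^{+}\cup M^{-}$. This holds because $N^{\pm}$ are built from $M^{\pm}$. If nodes lying exactly on $\Gammab$ were allowed, I would note that $M^{+}\cup M^{-}$ is assumed to cover $\Z^{d}$, as is implicit in \eqref{eq:Pdef}.

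\emph{Step 3: the controls.} Take the shielding case. The total trace decomposes as $\xi_{\gamma}=u^{h}|_{\gamma}$, with $u^{h}=u^{h,+}+u^{h,-}$. Here $u^{h,-}=\Gh\ast f^{h,-}$ solves $\Lh u^{h,-}=0$ on $M^{+}$, so its trace lies in $\operatorname{Im}\Ph_{\gamma}$ by Theorem~\ref{thm:projection}(iii). The outgoing field $u^{h,+}$, with source in $M^{+}$, has trace in $\operatorname{Im}\Qh_{\gamma}$. By the complementarity of Proposition~\ref{prop:Qh}, $\Ph_{\gamma}\xi_{\gamma}=u^{h,-}|_{\gamma}$.

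Theorem~\ref{thm:projection}(iii) then gives $\Ph\xi_{\gamma}|_{N^{+}}=u^{h,-}|_{N^{+}}$, that is, $\Gh\ast\lambda^{-}=u^{h,-}$ on $N^{+}$. Setting $g^{h}=-\lambda^{-}$, which is supported on $\gamma^{-}\subset M^{-}$, the solution of \eqref{eq:Luh} is $u^{h,+}+u^{h,-}-\Gh\ast\lambda^{-}$. This equals $u^{h,+}$ on $N^{+}$, as required.

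The confinement case is symmetric. We use $\Qh$, the range statement of Proposition~\ref{prop:Qh} (outgoing exterior traces), and uniqueness of the outgoing exterior solution with given trace to identify $\Gh\ast\lambda^{+}$ with $u^{h,+}$ on $M^{-}\cup\gamma$. Then $g^{h}=-\lambda^{+}$, supported in $\gamma^{+}\subset M^{+}$, cancels $u^{h,+}$ outside.

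\emph{Main obstacle.} The delicate step is the identification in Step 3. It requires that the two ranges intersect trivially, so that the decomposition of $\xi_{\gamma}$ is unique, and that agreement of traces on $\gamma$ propagates to all of $N^{+}$ (respectively $N^{-}$). On the interior side this follows from the nonresonance assumption through Theorem~\ref{thm:projection}(iii). On the exterior side it needs uniqueness of outgoing lattice solutions with zero trace. I would argue the exterior uniqueness via the representation $W=\Gh\ast[(\Lh W)\indicator{M^{+}}]$, using the fact that the projector identities already force $\Ph_{\gamma}\Qh_{\gamma}=0$. Everything else is bookkeeping.
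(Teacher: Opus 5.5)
Your Steps~1--2 follow the paper's proof. The capacity equation gives $V^{h}|_{\gamma}=\xi_{\gamma}$, \eqref{eq:Ginv} gives $\Lh V^{h}=\lambda$, and substituting into \eqref{eq:Pdef} with $\gamma^{\pm}\subset M^{\pm}$ gives $\Gh\ast\lambda^{-}$ and $\Gh\ast\lambda^{+}$. You use the second form of \eqref{eq:Pdef} and the paper the first, which makes no difference. For the controls, the paper only says they follow by changing the sign of the projected fields. It leaves the actual cancellation to Theorems~\ref{thm:controls} and~\ref{thm:confinement} and Proposition~\ref{prop:support}. Your Step~3 supplies that verification, along the same lines as those later proofs.

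A few remarks on Step~3.
\begin{itemize}
\item Theorem~\ref{thm:projection}(iii) does not apply to $\xi_{\gamma}$ itself, which in general is not in $\operatorname{Im}\Ph_{\gamma}$. Apply it instead to $\eta_{\gamma}=\Ph_{\gamma}\xi_{\gamma}$. As in the proof of~(ii), $\Ph\xi_{\gamma}|_{N^{+}}$ is an interior solution with trace $\eta_{\gamma}$, so it and $u^{h,-}|_{N^{+}}$ both equal $\Ph\eta_{\gamma}|_{N^{+}}$.
\item The ``main obstacle'' disappears if you take $V^{h}=u^{h}$ as the admissible extension. It is outgoing with $\Lh u^{h}=f^{h,+}+f^{h,-}$ finitely supported. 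Hence \eqref{eq:Pdef} and Definition~\ref{def:Qh} give $\Ph\xi_{\gamma}=\Gh\ast f^{h,-}=u^{h,-}$ and $\Qh\xi_{\gamma}=\Gh\ast f^{h,+}=u^{h,+}$ identically on the lattice.
\item Now use extension independence: Theorem~\ref{thm:projection}(i), and its $M^{-}$ analogue, in which the difference of two potentials is $W^{h}_{+}$, vanishing on $N^{-}=M^{-}\cup\gamma^{+}$. This gives $\Gh\ast\lambda^{-}=u^{h,-}$ on $N^{+}$ and $\Gh\ast\lambda^{+}=u^{h,+}$ on $N^{-}$ directly. You need no trivial-intersection argument and no separate exterior uniqueness; the proof of~(i) does not even use nonresonance.
\item Your justification that $\gamma\subset M^{+}\cup M^{-}$ ``because $N^{\pm}$ are built from $M^{\pm}$'' is not valid. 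A node with $h\bm{m}\in\Gammab$ lies in neither $M^{\pm}$ yet can lie in $N^{+}\cap N^{-}$. The covering $M^{+}\cup M^{-}=\Z^{d}$ is an implicit standing assumption of the paper, used when it writes $\indicator{M^{+}}+\indicator{M^{-}}=\indicator{\Lambda_{h}}$. Your fallback sentence states this correctly.
\end{itemize}
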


\begin{proof}
The capacity equation gives $V^{h}|_{\gamma}=\xi_{\gamma}$, while
\eqref{eq:Ginv} gives $\Lh V^{h}=\lambda$.  Since
$\gamma^{+}\subset M^{+}$ and $\gamma^{-}\subset M^{-}$,
substitution in~\eqref{eq:Pdef} yields
\[
  \Ph\xi_{\gamma}
  =\Gh\ast\lambda
   -\Gh\ast(\lambda\indicator{M^{+}})
  =\Gh\ast\lambda^{-}.
\]
The identity for $\Qh$ follows analogously, and the control formulas
follow by changing the sign of the corresponding projected field.
\end{proof}
\end{revision}

\begin{proposition}\label{prop:kernel}
Under the hypothesis of Theorem~\ref{thm:projection},
$\ker\Ph_{\gamma}$ $=$ $\operatorname{Im}\Qh_{\gamma}$, and
$\ell^{2}(\gamma)=\operatorname{Im}\Ph_{\gamma}
\oplus\operatorname{Im}\Qh_{\gamma}$.
\end{proposition}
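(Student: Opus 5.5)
The plan is to derive both claims from two facts already established: $\Ph_{\gamma}$ is idempotent (Theorem~\ref{thm:projection}(ii)), and $\Ph_{\gamma}+\Qh_{\gamma}=\mathrm{I}_{\gamma}$ (Proposition~\ref{prop:Qh}, equation~\eqref{eq:complementary}). With these in hand the statement is pure linear algebra on the finite-dimensional space $\ell^{2}(\gamma)$. No further lattice analysis is needed beyond what those two results already encode.

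First I show $\operatorname{Im}\Qh_{\gamma}\subseteq\ker\Ph_{\gamma}$. Write $\Qh_{\gamma}=\mathrm{I}_{\gamma}-\Ph_{\gamma}$. Then
\[
\Ph_{\gamma}\Qh_{\gamma}=\Ph_{\gamma}-(\Ph_{\gamma})^{2}=0 .
\]
So every $\eta=\Qh_{\gamma}\xi_{\gamma}$ satisfies $\Ph_{\gamma}\eta=0$.

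Next I show the reverse inclusion. If $\Ph_{\gamma}\xi_{\gamma}=0$, then
\[
\xi_{\gamma}=\Ph_{\gamma}\xi_{\gamma}+\Qh_{\gamma}\xi_{\gamma}=\Qh_{\gamma}\xi_{\gamma}\in\operatorname{Im}\Qh_{\gamma}.
\]
This proves $\ker\Ph_{\gamma}=\operatorname{Im}\Qh_{\gamma}$.

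For the direct sum, the spanning part is immediate: each $\xi_{\gamma}$ decomposes as $\Ph_{\gamma}\xi_{\gamma}+\Qh_{\gamma}\xi_{\gamma}$, so $\ell^{2}(\gamma)=\operatorname{Im}\Ph_{\gamma}+\operatorname{Im}\Qh_{\gamma}$. For trivial intersection, take $\eta\in\operatorname{Im}\Ph_{\gamma}\cap\operatorname{Im}\Qh_{\gamma}$. Idempotence gives $\Ph_{\gamma}\eta=\eta$, since $\Ph_{\gamma}$ acts as the identity on its range. The first part gives $\Ph_{\gamma}\eta=0$, since $\eta\in\ker\Ph_{\gamma}$. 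Hence $\eta=0$. The sum is direct in the algebraic sense, which is the sense intended here; it is not claimed to be orthogonal, because $\Ph_{\gamma}$ need not be self-adjoint.

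The only step needing care is confirming that~\eqref{eq:complementary} holds as an identity of operators on all of $\ell^{2}(\gamma)$, not merely on special traces. I would re-check it directly from the definitions. Fix one admissible extension $V^{h}$ and add~\eqref{eq:Pdef} to Definition~\ref{def:Qh}. Using $\indicator{M^{+}}+\indicator{M^{-}}=1$ on the support of $\Lh V^{h}$, and $\Gh\ast\Lh V^{h}=V^{h}$, the sum $\Ph\xi_{\gamma}+\Qh\xi_{\gamma}$ equals $V^{h}$, whose trace on $\gamma$ is $\xi_{\gamma}$.

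One subtlety is that $\Qh_{\gamma}$ must be independent of the extension. Proposition~\ref{prop:Qh} asserts this, but it also follows here directly: $\Qh_{\gamma}=\mathrm{I}_{\gamma}-\Ph_{\gamma}$ computed with any fixed extension, and $\Ph_{\gamma}$ is extension-independent by Theorem~\ref{thm:projection}(i). The capacity realization of Theorem~\ref{prop:capacity} gives an explicit consistency check, since $\Gh\ast\lambda^{-}+\Gh\ast\lambda^{+}=\Gh\ast\lambda=V^{h}$.
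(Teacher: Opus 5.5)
Your proposal is correct and follows the same route as the paper, which likewise treats the statement as an immediate algebraic consequence of $\Qh_{\gamma}=\mathrm{I}_{\gamma}-\Ph_{\gamma}$ (Proposition~\ref{prop:Qh}) together with idempotence of $\Ph_{\gamma}$. You spell out the two inclusions and the trivial-intersection argument that the paper leaves implicit, and your re-derivation of the complementarity identity (and hence extension-independence of $\Qh_{\gamma}$) from~\eqref{eq:Pdef} matches the paper's own justification.
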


\begin{revision}
This is the discrete counterpart of the classical Calder\'on
decomposition into interior- and exterior-extendable traces.  It is an
immediate algebraic consequence of Proposition~\ref{prop:Qh}: since
$\Qh_{\gamma}=\mathrm{I}_{\gamma}-\Ph_{\gamma}$ and
$\Ph_{\gamma}$ is a projection,
\[
  \operatorname{Im}\Qh_{\gamma}=\ker\Ph_{\gamma},
  \qquad
  \ell^{2}(\gamma)=\operatorname{Im}\Ph_{\gamma}
  \oplus\operatorname{Im}\Qh_{\gamma}.
\]
\end{revision}

\subsection{Closed-form shielding control}

\begin{theorem}[Closed-form control]\label{thm:controls}
Under the hypothesis of Theorem~\ref{thm:projection}, given the
measured boundary-strip trace $\xi_{\gamma}=u^{h}|_{\gamma}$ of the
total field, pick any compactly supported $V^{h}$ with
$V^{h}|_{\gamma}=\xi_{\gamma}$. Define
\begin{equation}
   g^{h}\;=\;-(\Lh V^{h})\cdot\indicator{M^{-}}.
   \label{eq:control}
\end{equation}
Then the solution $\widetilde{u}^{h}$ of~\eqref{eq:Luh} satisfies
\begin{equation}
   \widetilde{u}^{h}\big|_{N^{+}}
   \;=\;u^{h,+}\big|_{N^{+}}.
   \label{eq:cancellation}
\end{equation}
Equivalently, the adverse component is cancelled inside the protected
region while the wanted component is preserved. Furthermore, $g^{h}$
is independent of $V^{h}$ up to an additive term annihilated on
$N^{+}$ by LGF convolution.
\end{theorem}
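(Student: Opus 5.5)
The plan is linearity of the outgoing LGF solution operator, combined with the identity $\Gh\ast\Lh V^{h}=V^{h}$ for admissible extensions. By uniqueness of the outgoing lattice problem, the solution of~\eqref{eq:Luh} is
\[
  \widetilde{u}^{h}=\Gh\ast f^{h,+}+\Gh\ast f^{h,-}+\Gh\ast g^{h}
  =u^{h,+}+u^{h,-}+\Gh\ast g^{h},
\]
where $u^{h,-}=\Gh\ast f^{h,-}$. Here $u^{h}=u^{h,+}+u^{h,-}$ is the discrete total field whose trace is measured. It therefore suffices to show that $\Gh\ast g^{h}=-u^{h,-}$ on $N^{+}$. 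By~\eqref{eq:control} and~\eqref{eq:Pdef},
\[
  \Gh\ast g^{h}=-\Gh\ast[(\Lh V^{h})\indicator{M^{-}}]=-\Ph\xi_{\gamma}.
\]
So the claim reduces to the identity $\Ph\xi_{\gamma}|_{N^{+}}=u^{h,-}|_{N^{+}}$.

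I will prove this identity by applying Theorem~\ref{thm:projection}(i) with a convenient extension. The total field $u^{h}$ itself is admissible for $\xi_{\gamma}$: it is outgoing, $\Lh u^{h}=f^{h,+}+f^{h,-}$ has finite support, and its trace on $\gamma$ is $\xi_{\gamma}$. Since $\supp f^{h,\pm}\subseteq M^{\pm}$, formula~\eqref{eq:Pdef} gives
\[
  \Ph\xi_{\gamma}=\Gh\ast\bigl[(f^{h,+}+f^{h,-})\indicator{M^{-}}\bigr]=\Gh\ast f^{h,-}=u^{h,-}.
\]
Part (i) then transfers this to the compactly supported $V^{h}$ actually used in~\eqref{eq:control}. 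More precisely, the proof of (i) shows that $\Ph\xi_{\gamma}|_{N^{+}}$ is independent of the extension, not just its trace on $\gamma$. This gives $\widetilde{u}^{h}|_{N^{+}}=u^{h,+}|_{N^{+}}$.

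For the final assertion, take two compactly supported extensions $V_{1}^{h},V_{2}^{h}$ with difference $W^{h}$. The corresponding controls differ by $-(\Lh W^{h})\indicator{M^{-}}$. The computation in the proof of Theorem~\ref{thm:projection}(i) shows $\Gh\ast[(\Lh W^{h})\indicator{M^{-}}]=W^{h}_{-}$, which vanishes on $N^{+}$. So the two controls differ by a term whose LGF convolution is annihilated on $N^{+}$.

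The main obstacle is the step that compares the compactly supported $V^{h}$ with the outgoing, non-compactly supported extension $u^{h}$. Extension-independence in Theorem~\ref{thm:projection}(i) rests on $\Gh\ast\Lh W^{h}=W^{h}$ for the difference $W^{h}$. Here $W^{h}$ is a compactly supported function minus an outgoing one. It is therefore the outgoing solution generated by the finitely supported $\Lh W^{h}$, so the identity holds by uniqueness of the outgoing lattice problem. I will check explicitly that this mixed case is covered by the definition of admissible extensions and the uniqueness statement in Section~\ref{sec:projection}. I will also check the stencil identity $(\Lh W^{h})\indicator{M^{+}}=\Lh W^{h}_{+}$ when $W^{h}|_{\gamma}=0$, since the whole argument depends on it.
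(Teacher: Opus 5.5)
Your proof is correct, but it follows a different route from the paper.

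\textbf{The paper's route.} It works throughout with the compactly supported $V^{h}$ and shows $\Gh\ast g^{h}=-\Ph\xi_{\gamma}$ on $N^{+}$. It then splits $\xi_{\gamma}=u^{h,+}|_{\gamma}+u^{h,-}|_{\gamma}$ by linearity. The noise trace is placed in the range via Theorem~\ref{thm:projection}(iii), and the wanted trace in the kernel via Proposition~\ref{prop:kernel}.

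\textbf{Your route.} You apply extension independence once, using the total field $u^{h}$ itself as the outgoing admissible extension. Then $\Ph\xi_{\gamma}=\Gh\ast f^{h,-}=u^{h,-}$ follows in one line, and both components are handled at once.

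\textbf{What each buys.} Your version never uses (iii), Proposition~\ref{prop:kernel}, or the nonresonance hypothesis. The paper's step (b), by contrast, gets from Proposition~\ref{prop:kernel} only that $\Ph_{\gamma}(u^{h,+}|_{\gamma})$ vanishes on $\gamma$. Promoting this to vanishing on all of $N^{+}$ needs either interior uniqueness or exactly your computation with the extension $u^{h,+}$. You also prove the final ``independent of $V^{h}$'' clause, which the paper's proof leaves implicit. The paper's decomposition, for its part, makes the Calder\'on mechanism explicit (noise in the range, wanted sound in the kernel), and Theorem~\ref{thm:confinement} mirrors that structure.

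\textbf{The obstacle you flag.} It is milder than you expect. The proof of (i) only needs $\Gh\ast\Lh W^{h}_{+}=W^{h}_{+}$, not $\Gh\ast\Lh W^{h}=W^{h}$. Since $M^{+}$ is finite, $W^{h}_{+}=W^{h}\indicator{M^{+}}$ is compactly supported, so this is a finite-sum identity. Separately, $\Gh\ast\Lh u^{h}=u^{h}$ holds by the definition $u^{h}=\Gh\ast(f^{h,+}+f^{h,-})$, so no appeal to uniqueness of the outgoing lattice problem is required. The stencil identity you want to check holds exactly as in the paper. It relies on the same tacit convention that no lattice node lies on $\Gammab$, so that $\indicator{M^{+}}+\indicator{M^{-}}=1$.
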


\begin{proof}
Decompose the total field on the lattice as $u^{h}=u^{h,+}+u^{h,-}$,
where $u^{h,\pm}$ is the outgoing LGF solution of $L^{h}u^{h,\pm}=f^{h,\pm}$;
in particular
\begin{equation}
  L^{h}u^{h,-}=0\quad\text{on }M^{+},
  \qquad
  L^{h}u^{h,+}=0\quad\text{on }M^{-}.
  \label{eq:pm-homog}
\end{equation}
Let $\xi_{\gamma}=u^{h}|_{\gamma}$, and pick any compactly supported
extension $V^{h}$ with $V^{h}|_{\gamma}=\xi_{\gamma}$.

From~\eqref{eq:control} and~\eqref{eq:Ginv}, the control response on the
lattice is
\begin{equation}
  w^{h}=\Gh\ast g^{h}
  =-\,\Gh\ast\bigl[(L^{h}V^{h})\,\indicator{M^{-}}\bigr].
  \label{eq:wh-v2}
\end{equation}
Since $\indicator{M^{+}}+\indicator{M^{-}}
=\indicator{\Lambda_{h}}$ and $\Gh\ast L^{h}V^{h}=V^{h}$
by~\eqref{eq:Ginv}, we rewrite the exterior convolution as
\begin{equation}
  \Gh\ast\bigl[(L^{h}V^{h})\,\indicator{M^{-}}\bigr]
  =V^{h}-\Gh\ast\bigl[(L^{h}V^{h})\,\indicator{M^{+}}\bigr].
  \label{eq:split-v2}
\end{equation}
Substituting into~\eqref{eq:wh-v2} and restricting to $N^{+}$,
\begin{equation}
  w^{h}\big|_{N^{+}}
  =\bigl(\Gh\ast\bigl[(L^{h}V^{h})\,\indicator{M^{+}}\bigr]
  -V^{h}\bigr)\Big|_{N^{+}}
  =-\,\Ph\,\xi_{\gamma}\big|_{N^{+}},
  \label{eq:w-equals-minus-P}
\end{equation}
by the definition~\eqref{eq:Pdef} of the discrete Calder\'on potential.

By linearity of $\Ph$,
\begin{equation}
  \Ph\,\xi_{\gamma}
  =\Ph\bigl(u^{h,+}|_{\gamma}\bigr)+\Ph\bigl(u^{h,-}|_{\gamma}\bigr).
  \label{eq:linearity}
\end{equation}
We evaluate the two terms separately on $N^{+}$.

\emph{(a) Noise component lies in the range.} By~\eqref{eq:pm-homog},
$u^{h,-}$ satisfies $L^{h}u^{h,-}=0$ on $M^{+}$ and has trace
$u^{h,-}|_{\gamma}$ on $\gamma$. By Theorem~\ref{thm:projection}(iii),
\begin{equation}
  \Ph\bigl(u^{h,-}|_{\gamma}\bigr)\big|_{N^{+}}=u^{h,-}\big|_{N^{+}}.
  \label{eq:range-noise}
\end{equation}

\emph{(b) Wanted component lies in the kernel.} By~\eqref{eq:pm-homog},
$u^{h,+}$ satisfies $L^{h}u^{h,+}=0$ on $M^{-}$, i.e.\ $u^{h,+}$ is an
outgoing lattice-Helmholtz solution on the \emph{exterior}
$M^{-}$. By Proposition~\ref{prop:kernel}, such a trace
lies in the kernel of $\Ph_{\gamma}$ on $N^{+}$:
\begin{equation}
  \Ph\bigl(u^{h,+}|_{\gamma}\bigr)\big|_{N^{+}}=0.
  \label{eq:kernel-wanted}
\end{equation}

Combining~\eqref{eq:linearity}, \eqref{eq:range-noise},
and~\eqref{eq:kernel-wanted},
\begin{equation}
  \Ph\,\xi_{\gamma}\big|_{N^{+}}=u^{h,-}\big|_{N^{+}}.
  \label{eq:P-equals-noise}
\end{equation}

Substituting~\eqref{eq:P-equals-noise} into~\eqref{eq:w-equals-minus-P},
\begin{equation}
  w^{h}\big|_{N^{+}}=-\,u^{h,-}\big|_{N^{+}}.
  \label{eq:w-cancels-noise}
\end{equation}
By linearity, $\widetilde{u}^{h}=u^{h,+}+u^{h,-}+w^{h}$, and therefore
\begin{equation}
  \widetilde{u}^{h}\big|_{N^{+}}
  =\bigl(u^{h,+}+u^{h,-}-u^{h,-}\bigr)\big|_{N^{+}}
  =u^{h,+}\big|_{N^{+}},
\end{equation}
which is~\eqref{eq:cancellation}.
\end{proof}

\begin{revision}
\begin{proposition}[Control support]\label{prop:support}
Assume that the capacity matrix $S_{\gamma\gamma}$ is nonsingular.
Then the shielding control can be chosen with
$\supp g^{h}\subseteq\gamma^{-}$, i.e.\ on the single exterior lattice
layer adjacent to the protected region.
\end{proposition}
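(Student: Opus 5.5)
The plan is to use the outgoing admissible extension of Theorem~\ref{prop:capacity} in place of a compactly supported one, and then check that the argument of Theorem~\ref{thm:controls} still goes through. Given the measured trace $\xi_{\gamma}=u^{h}|_{\gamma}$, set $\lambda=S_{\gamma\gamma}^{-1}\xi_{\gamma}$, which is well defined because $S_{\gamma\gamma}$ is nonsingular. Extend $\lambda$ by zero off $\gamma$ and put $V^{h}=\Gh\ast\lambda$. By Theorem~\ref{prop:capacity} we have $V^{h}|_{\gamma}=\xi_{\gamma}$ and $\Lh V^{h}=\lambda$. We then define the control as in \eqref{eq:control}:
\[
g^{h}=-(\Lh V^{h})\indicator{M^{-}}=-\lambda\indicator{M^{-}}=-\lambda^{-}.
\]
Since $\gamma\cap M^{-}=\gamma^{-}$, this already gives $\supp g^{h}\subseteq\gamma^{-}$. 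It coincides with $g^{h}_{\rm sh}$ in Theorem~\ref{prop:capacity}.

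It remains to show that this particular $g^{h}$ still achieves the cancellation \eqref{eq:cancellation}. Theorem~\ref{thm:controls} was stated for compactly supported extensions, so I will not invoke it directly. Instead I rerun its proof with the outgoing extension. The control response is $w^{h}=\Gh\ast g^{h}=-\Gh\ast\lambda^{-}$. By \eqref{eq:capacity-projectors} this equals $-\Ph\xi_{\gamma}$ on the whole lattice, and in particular on $N^{+}$. The extension-independence in Theorem~\ref{thm:projection}(i) covers all admissible extensions, including outgoing ones. Hence $\Ph\xi_{\gamma}|_{N^{+}}$ agrees with the potential computed from any compactly supported extension. The remaining steps, \eqref{eq:linearity} through \eqref{eq:P-equals-noise}, depend only on $\Ph\xi_{\gamma}|_{N^{+}}$ and not on the extension. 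We therefore get $\Ph\xi_{\gamma}|_{N^{+}}=u^{h,-}|_{N^{+}}$ and $w^{h}|_{N^{+}}=-u^{h,-}|_{N^{+}}$. This yields $\widetilde u^{h}|_{N^{+}}=u^{h,+}|_{N^{+}}$.

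The step I expect to be the main obstacle is justifying the identity $\Gh\ast\Lh V^{h}=V^{h}$ for the outgoing extension. This identity is needed both in \eqref{eq:Pdef} and in the proof of extension-independence. Here it holds directly, since $V^{h}=\Gh\ast\lambda$ with $\lambda$ finitely supported, so $\Gh\ast\Lh V^{h}=\Gh\ast\lambda=V^{h}$ by construction.

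A second delicate point is the proof of Theorem~\ref{thm:projection}(i) when one extension is outgoing and the other compactly supported. Their difference $W^{h}$ is then not compactly supported, but it still satisfies $\Gh\ast\Lh W^{h}=W^{h}$ by linearity. The decomposition $W^{h}=W^{h}_{+}+W^{h}_{-}$ has $W^{h}_{+}$ supported in the bounded set $M^{+}$. So $\Gh\ast\Lh W^{h}_{+}=W^{h}_{+}$ holds by \eqref{eq:Ginv}, and the rest of that argument is unchanged.
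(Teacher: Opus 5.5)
Your proposal is correct and follows the paper's own proof. You take the capacity-matrix extension $V^{h}=\Gh\ast\lambda$ from Theorem~\ref{prop:capacity}, so that $g^{h}=-(\Lh V^{h})\indicator{M^{-}}=-\lambda\indicator{\gamma^{-}}$, and your extra check that this non-compactly-supported extension still gives the cancellation of Theorem~\ref{thm:controls} (via the extension-independence in Theorem~\ref{thm:projection}(i)) is sound and makes explicit a step the paper's short proof leaves implicit.
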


\begin{proof}
Apply Theorem~\ref{prop:capacity}.  The admissible extension
$V^{h}=\Gh\ast\lambda$ satisfies $\Lh V^{h}=\lambda$ with
$\supp\lambda\subseteq\gamma$.  Therefore
\[
  g^{h}=-(\Lh V^{h})\indicator{M^{-}}
       =-\lambda\indicator{\gamma^{-}},
\]
which has the asserted support.
\end{proof}
\end{revision}

\begin{remark}
\begin{revision}
The crossing nodes of $\gamma^{-}$ constitute the admissible actuator
layer.  A particular density may vanish at some of these nodes, so
minimality is understood in terms of the available support rather than
nonzero amplitude at every node.
\end{revision}  The resulting control has the form
of a discrete single-layer potential on the exterior boundary strip.
\label{rem:single-layer}
\end{remark}

\begin{theorem}[Volumetric cancellation with wanted-sound
preservation]\label{thm:cancellation}
Under the hypotheses of Theorem~\ref{thm:controls},
\(
\rev{\widetilde{u}^{h}|_{M^{+}}=u^{h,+}|_{M^{+}},}
\)
and outside $\Omegain$ the controlled field is the sum of the original
total field and the outgoing LGF response of $g^{h}$. The wanted
interior sound is preserved exactly; the control is a physically
outgoing field generated by secondary sources on a single exterior
layer adjacent to $\Gammab$.
\end{theorem}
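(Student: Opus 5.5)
The result is essentially a restatement of Theorem~\ref{thm:controls} combined with linearity and Proposition~\ref{prop:support}, so I plan to assemble it from those pieces.

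\textbf{Step 1: interior cancellation.} Since $M^{+}\subseteq N^{+}$ (the stencil sweep of $M^{+}$ contains $M^{+}$ itself), restricting the identity~\eqref{eq:cancellation} of Theorem~\ref{thm:controls} from $N^{+}$ to $M^{+}$ gives $\widetilde{u}^{h}|_{M^{+}}=u^{h,+}|_{M^{+}}$ at once. I will note explicitly that the identity holds on all of $N^{+}$, so it also covers the exterior strip nodes $\gamma^{-}$. This is what makes the statement ``volumetric'': the protected field is reproduced on every lattice node of $M^{+}$, not only on the trace.

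\textbf{Step 2: the exterior representation.} By linearity of~\eqref{eq:Luh} and uniqueness of the outgoing lattice solution, I write
\[
\widetilde{u}^{h}=\Gh\ast(f^{h,+}+f^{h,-})+\Gh\ast g^{h}=u^{h}+w^{h}.
\]
Here $w^{h}=\Gh\ast g^{h}$ is outgoing, since $g^{h}$ has finite support (any compactly supported $V^{h}$ gives a compactly supported $\Lh V^{h}$). On $M^{-}$ the controlled field is therefore the original total field plus the outgoing LGF response of the control, which is the second claim.

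\textbf{Step 3: physical interpretation.} The preservation of wanted sound is the statement of Step~1: on $M^{+}$ the adverse part $u^{h,-}$ is removed, while $u^{h,+}$ is untouched. For the support claim I will invoke Proposition~\ref{prop:support}. Under nonsingularity of $S_{\gamma\gamma}$, choosing the outgoing admissible extension $V^{h}=\Gh\ast\lambda$ gives $g^{h}=-\lambda^{-}$, supported on $\gamma^{-}$, which is the single exterior layer adjacent to $\Gammab$.

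\textbf{Main obstacle.} Theorem~\ref{thm:controls} is stated for compactly supported extensions, while the single-layer choice uses an outgoing one. I need to check that Steps~1 and~2 still hold for this choice. By Theorem~\ref{thm:projection}(i), $\Ph\xi_{\gamma}|_{N^{+}}$ is independent of the admissible extension, and admissible extensions satisfy $\Gh\ast\Lh V^{h}=V^{h}$. With these two facts, the chain~\eqref{eq:wh-v2}--\eqref{eq:w-cancels-noise} goes through verbatim, so the cancellation holds for the single-layer control as well.

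Finally, I will flag that this support statement is conditional on the capacity-matrix hypothesis. Without that hypothesis one only gets $\supp g^{h}\subseteq M^{-}$.
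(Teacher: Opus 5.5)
Your proposal is correct and follows the route the paper takes implicitly. The paper gives no separate proof; it treats the result as the restriction of Theorem~\ref{thm:controls} from $N^{+}$ to $M^{+}$, combined with linearity and Proposition~\ref{prop:support}. Your extra care is warranted on two points the paper leaves tacit: the cancellation chain survives for the outgoing extension $V^{h}=\Gh\ast\lambda$ because $\Ph\xi_{\gamma}|_{N^{+}}$ does not depend on the extension, and the single-exterior-layer claim requires nonsingularity of $S_{\gamma\gamma}$, which is not among the hypotheses of Theorem~\ref{thm:controls}.
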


\begin{remark}[Measurement noise]\label{rem:robustness}
\begin{revision}
For the capacity realization,
$\delta\lambda=S_{\gamma\gamma}^{-1}\delta\xi_{\gamma}$.  Hence, in
the Euclidean norm,
\[
  \frac{\|\delta\lambda\|_{2}}{\|\lambda\|_{2}}
  \leq \kappa_{2}(S_{\gamma\gamma})
       \frac{\|\delta\xi_{\gamma}\|_{2}}
            {\|\xi_{\gamma}\|_{2}},
\]
whenever the denominators are nonzero.  This is a worst-case
finite-dimensional perturbation bound for the density.  \newrev{For the
field evaluated on a target set $X$, an additional operator factor
appears:
\[
 \|\delta u^{h}\|_{2,X}
 \leq \|h^{2}G^{h}(X,\gamma)\|_{2}
       \|S_{\gamma\gamma}^{-1}\|_{2}
       \|\delta\xi_{\gamma}\|_{2}.
\]
Thus the condition number alone does not determine the physical-field
error.}  No monotone asymptotic law in $h$ is assumed.  The experiments in Section~\ref{sec:numerics} show a
noise floor set jointly by discretisation error and matrix
conditioning, consistent with the observations in
\cite{LimEtAl2009AIAA,LimEtAl2011JASA}.
\end{revision}
\end{remark}

\section{Exterior cancellation from the total-field trace:
         acoustic confinement of interior noise}
\label{sec:confinement}

\subsection{Motivation and setting}

The shielding problem of Section~\ref{sec:controls} cancels exterior
noise inside $\Omegain$.  A dual problem, of equal practical relevance,
is \emph{acoustic confinement}: an adverse source lies \emph{inside}
$\Omegain$ (a loud machine, a speaker in a meeting pod), and one
seeks controls supported near the boundary from the interior side that
cancel the adverse field outside $\Omegain$ while preserving any
ambient exterior sound.  The LGF construction handles both sides
symmetrically because convolution with $\Gh$ respects the radiation
condition regardless of source location.

Let $f_{\mathrm{adv}}$ be the interior adverse source (to be confined)
and $f_{\mathrm{amb}}$ the exterior ambient source (to be preserved).
The confinement problem asks for a control $g$ with
$\supp g\subseteq\overline{\Omegain}$ such that the modified
field satisfies $\widetilde{u}=u_{\mathrm{amb}}$ outside
$\overline{\Omegain}$.

\subsection{Discrete confinement and dual projection}

On the lattice, seek $g^{h}$ with $\supp g^{h}\subseteq M^{+}$ such
that the solution $\widetilde{u}^{h}$ of
$L^{h}\widetilde{u}^{h}=f^{h}_{\mathrm{adv}}+f^{h}_{\mathrm{amb}}+g^{h}$
satisfies $\widetilde{u}^{h}|_{N^{-}}=u^{h}_{\mathrm{amb}}|_{N^{-}}$.
This is the exact dual of the shielding constraint on $N^{+}$.

Define the exterior LGF projection
$\Qh\xi_{\gamma}:=V^{h}-\Gh\ast[(L^{h}V^{h})\,\indicator{M^{-}}]$,
$\Qh_{\gamma}\xi_{\gamma}:=(\Qh\xi_{\gamma})|_{\gamma}$.
Under nonresonance, $\Qh_{\gamma}$ is a projection onto the trace
space of outgoing lattice-Helmholtz solutions on $M^{-}\cup\gamma$,
and $\Ph_{\gamma}+\Qh_{\gamma}=\mathrm{I}_{\gamma}$~\cite{LoncaricRyabenkiiTsynkov2001}.

\begin{theorem}[Closed-form confinement control]\label{thm:confinement}
Assume the hypothesis of Proposition~\ref{prop:Qh}.  Given the
measured boundary trace $\xi_{\gamma}=u^{h}|_{\gamma}$, pick any
compactly supported extension $V^{h}$ with
$V^{h}|_{\gamma}=\xi_{\gamma}$ and define
\begin{equation}
  g^{h}\;=\;-(\Lh V^{h})\cdot\indicator{M^{+}}.
  \label{eq:control-conf}
\end{equation}
Then $\widetilde{u}^{h}|_{M^{-}\cup\gamma}
=u^{h}_{\mathrm{amb}}|_{M^{-}\cup\gamma}$:
the adverse interior field is cancelled exactly outside $\Omegain$
while the ambient exterior field is preserved.
\end{theorem}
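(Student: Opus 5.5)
The plan is to mirror the proof of Theorem~\ref{thm:controls}, exchanging the roles of $M^{+}$ and $M^{-}$ and using $\Qh$ in place of $\Ph$.

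\emph{Step 1: the control response equals $-\Qh\xi_{\gamma}$.} Write the total field as $u^{h}=u^{h}_{\mathrm{adv}}+u^{h}_{\mathrm{amb}}$, where $u^{h}_{\mathrm{adv}}=\Gh\ast f^{h}_{\mathrm{adv}}$ and $u^{h}_{\mathrm{amb}}=\Gh\ast f^{h}_{\mathrm{amb}}$. Then $\Lh u^{h}_{\mathrm{adv}}=0$ on $M^{-}$, this field is outgoing, and $\Lh u^{h}_{\mathrm{amb}}=0$ on $M^{+}$. By \eqref{eq:control-conf} and \eqref{eq:Ginv}, the control response is
\[
  w^{h}=\Gh\ast g^{h}=-\Gh\ast\bigl[(\Lh V^{h})\indicator{M^{+}}\bigr].
\]
Using $\Gh\ast\Lh V^{h}=V^{h}$ and $\indicator{M^{+}}+\indicator{M^{-}}=1$, this becomes
\[
  w^{h}=-\bigl(V^{h}-\Gh\ast[(\Lh V^{h})\indicator{M^{-}}]\bigr)=-\Qh\xi_{\gamma}.
\]

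\emph{Step 2: the exterior analogue of Theorem~\ref{thm:projection}.} I need two facts about $\Qh$.
\begin{enumerate}[label=(\alph*)]
\item If $v^{h}$ is outgoing, satisfies $\Lh v^{h}=0$ on $M^{-}$, and has trace $\eta_{\gamma}$, then $\Qh\eta_{\gamma}=v^{h}$ on $M^{-}\cup\gamma$. To prove it, take the admissible extension $V^{h}=v^{h}$. This is outgoing, and $\Lh v^{h}$ is finitely supported in $M^{+}$ because sources in $M^{-}$ vanish. Since $(\Lh v^{h})\indicator{M^{-}}=0$, we get $\Qh\eta_{\gamma}=v^{h}$ everywhere.
\item If $\Lh v^{h}=0$ on $M^{+}$, then $\Qh\eta_{\gamma}=0$ on $M^{-}\cup\gamma$.
\end{enumerate}

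For (b) I will argue as follows. By part (i) of the extension independence (reproved for $\Qh$ with $M^{\pm}$ swapped, so that $\Qh\xi_{\gamma}|_{N^{-}}$ is extension-independent), I may choose any admissible extension. Take $V^{h}$ compactly supported and equal to $v^{h}$ on $N^{+}$. Then $\Ph\eta_{\gamma}|_{N^{+}}=v^{h}|_{N^{+}}$ by Theorem~\ref{thm:projection}(iii). The identity $\Ph\xi_{\gamma}+\Qh\xi_{\gamma}=V^{h}$ is pointwise, from \eqref{eq:Pdef} and Definition~\ref{def:Qh}. On $\gamma$ this gives $\Qh\eta_{\gamma}=\eta_{\gamma}-\eta_{\gamma}=0$.

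To extend this vanishing from $\gamma$ to all of $M^{-}$, I will use $\Qh\xi_{\gamma}=\Gh\ast[(\Lh V^{h})\indicator{M^{+}}]$. This field is outgoing and lattice-Helmholtz on $M^{-}$, with zero trace on $\gamma\supset\gamma^{-}$. I then need uniqueness for the exterior lattice Dirichlet problem with outgoing condition; this gives vanishing on $M^{-}$.

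\emph{Step 3: conclude.} By linearity, $\Qh\xi_{\gamma}=u^{h}_{\mathrm{adv}}$ on $M^{-}\cup\gamma$. Hence
\[
  \widetilde{u}^{h}=u^{h}_{\mathrm{amb}}+u^{h}_{\mathrm{adv}}-\Qh\xi_{\gamma}=u^{h}_{\mathrm{amb}}
  \quad\text{there}.
\]

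\emph{Main obstacle.} The hard step is the exterior uniqueness in (b): an outgoing lattice solution on $M^{-}$ vanishing on the strip must vanish identically. This is a Rellich-type statement for the lattice, and it can fail at the band edges or through trapped modes. Rather than prove it, I would first try to avoid it. One route is to invoke the hypothesis of Proposition~\ref{prop:Qh}, which asserts that $\Qh_{\gamma}$ projects onto exterior outgoing traces, together with Proposition~\ref{prop:kernel}: $\ker\Qh_{\gamma}=\operatorname{Im}\Ph_{\gamma}$ contains $u^{h}_{\mathrm{amb}}|_{\gamma}$. I would then use the fact that an outgoing exterior potential is determined on $M^{-}$ by its $\gamma$-trace, which is exactly what the theorem's assumption supplies.
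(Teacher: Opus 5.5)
Your Steps 1, 2(a) and 3 coincide with the paper's proof. The control response is $w^{h}=-\Qh\xi_{\gamma}$, the adverse trace is reproduced by $\Qh$ on $N^{-}$, and the ambient trace must be annihilated there.

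The gap is in the second half of 2(b). You only obtain $\Qh_{\gamma}\eta_{\gamma}=0$ on $\gamma$. To reach $M^{-}$ you appeal to an exterior outgoing uniqueness theorem, which you do not prove and which the hypotheses do not contain. The hypothesis of Proposition~\ref{prop:Qh} is interior nonresonance plus the outgoing character of $\Gh$. Propositions~\ref{prop:Qh} and~\ref{prop:kernel} only say which traces lie in $\operatorname{Im}\Qh_{\gamma}$ and $\ker\Qh_{\gamma}$. A range characterization says nothing about injectivity of the trace map on exterior solutions. So your claim that ``the theorem's assumption supplies'' this is not correct, and as written the decisive step of (b) is unsupported.

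The missing observation is that no uniqueness theorem is needed, and the ingredients are already in your (b). Take the extension you chose there: $V^{h}$ compactly supported with $V^{h}=v^{h}$ on $N^{+}$. Every stencil centred in $M^{+}$ reads only values on $N^{+}$, so $(\Lh V^{h})\indicator{M^{+}}=(\Lh v^{h})\indicator{M^{+}}=0$. Hence $\Qh\eta_{\gamma}=\Gh\ast[(\Lh V^{h})\indicator{M^{+}}]$ vanishes on the whole lattice. This mirrors your part (a) and the proof of Theorem~\ref{thm:projection}(iii), and it makes the detour through $\Ph$ and the trace identity unnecessary. Your swapped version of Theorem~\ref{thm:projection}(i) then transfers this to the extension used in the theorem. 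The two potentials differ by $W^{h}_{+}$, which vanishes on $N^{-}=M^{-}\cup\gamma^{+}$, and Step 3 closes.

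Your concern about band edges and trapped modes is also misplaced here, because the data vanish on both sublayers. Suppose $w^{h}=\Gh\ast\mu$ with $\mu$ finitely supported in $M^{+}$ and $w^{h}=0$ on all of $\gamma$. Then $z^{h}=w^{h}\indicator{M^{-}}$ satisfies $\Lh z^{h}=0$ on all of $\Lambda_{h}$. The function $c=w^{h}-z^{h}$ is compactly supported with $\Lh c=\mu$. Therefore $w^{h}=\Gh\ast\Lh c=c$, i.e.\ $z^{h}=0$. Only the identity $\Gh\ast\Lh c=c$ for compactly supported $c$ is used.

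The paper itself passes over this point in one line via $\operatorname{Im}\Ph_{\gamma}=\ker\Qh_{\gamma}$. The direct computation above is what justifies vanishing on all of $N^{-}$ rather than only on $\gamma$.
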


\begin{revision}
\begin{proof}
Let $w^{h}=\Gh\ast g^{h}$.  From the definition of $\Qh$ and
\eqref{eq:control-conf},
\[
  w^{h}=-\Gh\ast\bigl[(\Lh V^{h})\indicator{M^{+}}\bigr]
       =-\Qh\xi_{\gamma}.
\]
Write the measured trace as
$\xi_{\gamma}=u^{h}_{\mathrm{adv}}|_{\gamma}
+u^{h}_{\mathrm{amb}}|_{\gamma}$.  The field generated by the
interior adverse source is outgoing and lattice Helmholtz on $M^{-}$,
so its trace lies in $\operatorname{Im}\Qh_{\gamma}$ and
\[
  \Qh(u^{h}_{\mathrm{adv}}|_{\gamma})|_{N^{-}}
  =u^{h}_{\mathrm{adv}}|_{N^{-}}.
\]
The ambient exterior field is lattice Helmholtz on $M^{+}$, so its
trace lies in $\operatorname{Im}\Ph_{\gamma}=\ker\Qh_{\gamma}$.
Therefore
$\Qh\xi_{\gamma}|_{N^{-}}=u^{h}_{\mathrm{adv}}|_{N^{-}}$ and
$w^{h}|_{N^{-}}=-u^{h}_{\mathrm{adv}}|_{N^{-}}$.  It follows that
$\widetilde{u}^{h}|_{N^{-}}=u^{h}_{\mathrm{amb}}|_{N^{-}}$.
Under the capacity-matrix hypothesis of
Theorem~\ref{prop:capacity}, the control is
$g^{h}=-\lambda^{+}$ and is supported on $\gamma^{+}$.
\end{proof}
\end{revision}

\begin{remark}
The confinement control~\eqref{eq:control-conf} differs from the
shielding control~\eqref{eq:control} only in the indicator:
$\indicator{M^{+}}$ for confinement versus $\indicator{M^{-}}$ for
shielding.  The two controls are supported on opposite sides of the
boundary strip ($\gamma^{+}$ versus $\gamma^{-}$) and cancel the
adverse field on opposite sides of $\Omegain$.  \begin{revision}
The identity $\Ph_{\gamma}+\Qh_{\gamma}=\mathrm{I}_{\gamma}$ expresses
a decomposition of the trace space, not simultaneous satisfaction of
two preservation objectives.  Adding the shielding and confinement
densities gives $-\lambda$ in the capacity realization and generally
cancels the full represented trace; a combined design therefore
requires its own coupled objective.
\end{revision}
\end{remark}

\begin{revision}
\section{One-sided measurement for pure-noise shielding}\label{sec:onesided}

The full shielding construction uses the trace on
$\gamma=\gamma^{+}\cup\gamma^{-}$.  We now consider the restricted
setting in which no unknown wanted source is present and only the
exterior-layer trace is measured.  The result requires two additional
invertibility assumptions, which also expose the possible instability
of the reconstruction.

Let
\[
  S_{--}=h^{2}\Gh(\gamma^{-},\gamma^{-}),
  \qquad
  S_{+-}=h^{2}\Gh(\gamma^{+},\gamma^{-}),
\]
where the first argument denotes target nodes and the second source
nodes.  Assume that $S_{--}$ is nonsingular and that the interior
lattice Dirichlet problem on $M^{+}$ with prescribed values on
$\gamma^{-}$ is uniquely solvable.  Given
$\xi_{\gamma^{-}}$, define
\begin{equation}
  T^{-}_{h}:=S_{+-}S_{--}^{-1},
  \qquad
  \xi_{\gamma^{+}}=T^{-}_{h}\xi_{\gamma^{-}}.
  \label{eq:T-h}
\end{equation}
The reconstruction matrix has size
$|\gamma^{+}|\times|\gamma^{-}|$ and satisfies
\begin{equation}
  \|T^{-}_{h}\|_{2}
  =\|S_{+-}S_{--}^{-1}\|_{2}
  \leq
  \frac{\|S_{+-}\|_{2}}{\sigma_{\min}(S_{--})}.
  \label{eq:T-bound}
\end{equation}
Thus a small singular value of $S_{--}$ can strongly amplify
measurement or discretisation errors.  \newrev{The displayed estimate is
a worst-case submultiplicative bound; equality would additionally
require alignment of the relevant singular directions, so typical
amplification may be milder.}

\begin{theorem}[One-sided measurement for pure-noise shielding]
\label{thm:one-sided}
Assume the hypotheses above and suppose that $f^{h,+}=0$.  Let
$\xi_{\gamma^{-}}=u^{h,-}|_{\gamma^{-}}$ and set
\[
  \lambda_{-}=S_{--}^{-1}\xi_{\gamma^{-}},
  \qquad
  v^{h}=\Gh\ast\lambda_{-},
\]
where $\lambda_{-}$ is extended by zero away from $\gamma^{-}$.
Then
$v^{h}|_{N^{+}}=u^{h,-}|_{N^{+}}$, and the control
\begin{equation}
  g^{h}_{-}=-\lambda_{-}
  \quad\text{on }\gamma^{-},
  \qquad g^{h}_{-}=0\quad\text{elsewhere},
  \label{eq:control-onesided}
\end{equation}
produces $\widetilde{u}^{h}|_{N^{+}}=0$.
\end{theorem}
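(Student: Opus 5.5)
The plan is a uniqueness argument on the interior region: $v^{h}$ and $u^{h,-}$ solve the same interior lattice Dirichlet problem on $M^{+}$ with data prescribed on $\gamma^{-}$, so they must coincide on $N^{+}$. The cancellation claim then follows from linearity.

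\emph{Step 1: $v^{h}$ has the right exterior-layer values.} By construction and the convolution convention~\eqref{eq:convolution}, for target nodes in $\gamma^{-}$ we have
\[
v^{h}|_{\gamma^{-}}=h^{2}\Gh(\gamma^{-},\gamma^{-})\lambda_{-}=S_{--}S_{--}^{-1}\xi_{\gamma^{-}}=\xi_{\gamma^{-}}=u^{h,-}|_{\gamma^{-}}.
\]

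\emph{Step 2: both fields are lattice Helmholtz on $M^{+}$.} By~\eqref{eq:Ginv}, $\Lh v^{h}=\lambda_{-}$, which is supported in $\gamma^{-}\subset M^{-}$. Hence $\Lh v^{h}=0$ on $M^{+}$. Likewise $\Lh u^{h,-}=f^{h,-}=0$ on $M^{+}$, by~\eqref{eq:pm-homog}.

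\emph{Step 3: uniqueness gives agreement on $N^{+}$.} Set $d^{h}=v^{h}-u^{h,-}$. Then $\Lh d^{h}=0$ on $M^{+}$ and $d^{h}|_{\gamma^{-}}=0$. The assumed unique solvability of the interior Dirichlet problem on $M^{+}$ with data on $\gamma^{-}$ forces $d^{h}=0$ on $N^{+}$. This needs $N^{+}=M^{+}\cup\gamma^{-}$ and that the stencil of each $M^{+}$ node lies in $N^{+}$, both of which hold by the definition of $N^{+}$ as the stencil sweep of $M^{+}$ and $\gamma=N^{+}\cap N^{-}$. So $v^{h}|_{N^{+}}=u^{h,-}|_{N^{+}}$.

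\emph{Step 4: cancellation.} The control response is $w^{h}=\Gh\ast g^{h}_{-}=-v^{h}$. Since $f^{h,+}=0$, the controlled field is $\widetilde{u}^{h}=u^{h,-}+w^{h}=u^{h,-}-v^{h}$, and this vanishes on $N^{+}$ by Step 3.

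The main obstacle is Step 3. There one must be careful that the hypothesis is exactly the "Dirichlet data on $\gamma^{-}$" problem, not the strip-$\gamma$ nonresonance used earlier. One must also check that $\gamma^{+}$ values (equivalently $v^{h}|_{\gamma^{+}}=S_{+-}\lambda_{-}=T^{-}_{h}\xi_{\gamma^{-}}$) are determined rather than prescribed in that problem. As a byproduct, this identifies~\eqref{eq:T-h} as the correct reconstruction of the interior-sublayer trace $u^{h,-}|_{\gamma^{+}}$.
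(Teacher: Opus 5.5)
Your proposal is correct and follows the paper's proof essentially step for step. The capacity solve matches $u^{h,-}$ on $\gamma^{-}$, and both fields are homogeneous lattice-Helmholtz on $M^{+}$. Uniqueness for the Dirichlet problem with data on $\gamma^{-}$ then gives agreement on $N^{+}$, and the control response $-v^{h}$ cancels $u^{h,-}$ there. You also check explicitly that $N^{+}=M^{+}\cup\gamma^{-}$ (this tacitly assumes no lattice node lies exactly on $\Gamma_{\mathrm{b}}$), and that the operative hypothesis is the $\gamma^{-}$-Dirichlet problem rather than the strip nonresonance condition; the paper leaves both points implicit.
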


\begin{proof}
By construction, $v^{h}|_{\gamma^{-}}=\xi_{\gamma^{-}}$.  Both
$v^{h}$ and $u^{h,-}$ satisfy the homogeneous lattice Helmholtz
equation on $M^{+}$.  Their difference therefore solves the interior
homogeneous problem with zero data on $\gamma^{-}$, and uniqueness
gives $v^{h}=u^{h,-}$ on $N^{+}$.  The field generated by
$g^{h}_{-}$ is $-v^{h}$, so it cancels $u^{h,-}$ on $N^{+}$.
Equation~\eqref{eq:T-h} is obtained by evaluating $v^{h}$ on
$\gamma^{+}$.
\end{proof}

If an unknown wanted interior source is present, the measured value on
$\gamma^{-}$ is
$u^{h,-}|_{\gamma^{-}}+u^{h,+}|_{\gamma^{-}}$.  The second component
is not generally determined by a homogeneous interior extension from
$\gamma^{-}$, and applying $T^{-}_{h}$ to the total measurement need
not reproduce the true trace on $\gamma^{+}$.  Consequently,
one-sided data alone do not in general preserve an unknown wanted
field.  The full two-sided trace, or prior subtraction of a known
wanted field, is then required.

\paragraph{Numerical verification.}
Table~\ref{tab:onesided} reports a grid-refinement study for an
analytic point source on the circular region.  The one-sided residual
is larger than the two-sided residual and the observed rate increases
from approximately $1.4$ to $1.7$.  These tests include consistency
error because the analytic source does not satisfy the lattice
equation exactly. 

\begin{table}[t]
\centering
\caption{Convergence of one-sided ($\gamma^{-}$-only) vs.\ two-sided
shielding (circle, $k=5$, analytic point source).}\label{tab:onesided}
\begin{tabular}{cccccc}
\toprule
$n$ & $h$ & Two-sided $\varepsilon_{\mathrm{rel}}$ &
  One-sided $\varepsilon_{\mathrm{rel}}$ &
  $\varepsilon_{\mathrm{one}}/\varepsilon_{\mathrm{two}}$ & Rate \\
\midrule
31   & $1.34\!\times\!10^{-1}$ & $2.55\!\times\!10^{-2}$ & $5.04\!\times\!10^{-2}$ & 2.0 & -- \\
63   & $6.72\!\times\!10^{-2}$ & $8.20\!\times\!10^{-3}$ & $1.90\!\times\!10^{-2}$ & 2.3 & 1.40 \\
127  & $3.36\!\times\!10^{-2}$ & $2.16\!\times\!10^{-3}$ & $6.96\!\times\!10^{-3}$ & 3.2 & 1.45 \\
255  & $1.68\!\times\!10^{-2}$ & $5.41\!\times\!10^{-4}$ & $2.17\!\times\!10^{-3}$ & 4.0 & 1.68 \\
\bottomrule
\end{tabular}
\end{table}

\begin{remark}[Practical interpretation]
The reconstruction $T^{-}_{h}$ is relevant when microphones are
available only outside the protected region and the measured field is
pure adverse noise, or when a known wanted contribution has first been
subtracted.  It is a geometry- and frequency-dependent map and should
be monitored through~\eqref{eq:T-bound}; it is not a uniformly stable
replacement for two-sided measurement.

\emph{Connection to NASC.}
\begin{newrevision}
The nonlocal active sound control scheme of Utyuzhnikov, Hu, and
Zhou~\cite{Utyuzhnikov2017,ZhouUtyuzhnikov2020,
HuUtyuzhnikov2022,HuUtyuzhnikov2024} uses two geometrically separated
Huygens surfaces: an observation surface supplies field data and a
control surface carries equivalent secondary sources.  The present
boundary strip $\gamma=\gamma^{+}\cup\gamma^{-}$ is the lattice
counterpart of this two-surface structure, but the two sublayers are
one stencil apart and arise from the discrete operator rather than from
independently meshed physical surfaces.  The map $T^{-}_{h}$ transfers
measurements from $\gamma^{-}$ to the missing trace on $\gamma^{+}$,
after which the capacity solve produces the control density.  Under the
stated invertibility assumptions this map is exact for the discrete
pure-noise problem; its practical stability is governed by the singular
values in~\eqref{eq:T-bound}.  NASC permits more flexible physical
separation of sensing and actuation surfaces, whereas the present result
emphasizes an exact boundary-strip algebra on a Cartesian lattice.
\end{newrevision}
\end{remark}
\end{revision}

\section{Numerical experiments}\label{sec:numerics}

\finalrev{All numerical experiments are two-dimensional.}
We validate the control construction on three
geometries: (a) a circular protected region of radius $0.5$ centered
at the origin; (b) an L-shaped region (bounding square
$1.4\times 1.4$, arm thickness $0.3$); (c) a star-shaped region with
radial profile $r(\theta)=0.5+0.12\cos(5\theta)$. All experiments use
the same lattice: $n=2^{7}-1=127$ nodes per dimension over the
sampling window $[-2.15,2.15]^{2}$, with mesh spacing $h\approx0.0336$,
wavenumber $k=5$ (roughly $38$ points per wavelength). The LGF is
precomputed on $[0,n-1]^{2}$ by the DST-with-Hankel-correction method
of~\cite{WangXia2025}.

\paragraph{Incident fields.}  Two classes of incident field are used.
\textbf{LGF-constructed} point sources are built by placing a discrete
delta at the nearest lattice node and convolving with the precomputed
LGF $\Gh$: $u^{h}_{\mathrm{LGF}}=\Gh\ast\delta_{\bm{m}_{0}}$.  These
fields satisfy the discrete Helmholtz equation exactly, and the
shielding control cancels them to machine precision --- a verification
of the algebraic identity.  \textbf{Analytic} incident fields are
evaluated from their continuum formulas directly on the grid:
\[
u^{\mathrm{pw}}(\bm{x})=e^{ik\mathbf{d}\cdot\bm{x}}\quad \mbox{(plane wave)}
\] 
and 
\[
u^{\mathrm{ps}}(\bm{x})=\frac{i}{4}H_{0}^{(1)}(k|\bm{x}-
\bm{x}_{s}|)\quad \mbox{(point source, free-space Hankel function)}
\]
These do \emph{not} satisfy the discrete Helmholtz equation; the residual after
shielding is therefore controlled by the discretisation error of the
5-point Laplacian and the boundary-strip approximation.
\begin{newrevision}
Heuristically, the smooth-field consistency error of the five-point
operator is $O(h^{2})$ away from a source, while sampling the continuum
field on a staircase boundary strip introduces geometry- and
source-distance-dependent pre-asymptotic terms.  For a Hankel source
outside the protected region, the solution is smooth inside but its
higher derivatives grow as the source approaches the strip.  The
measured rates therefore need not be exactly two on the available grids,
even though the finite-difference consistency error is second order.
\end{newrevision}  For the circular region at $k=5$
on the $127\times127$ grid, the analytic point source and plane wave
give comparable residuals ($2.2\times10^{-3}$ and
$2.0\times10^{-3}$), consistent with a common discretisation floor.  Unless stated otherwise, the experiments in
Sections~\ref{sec:noise}--\ref{sec:wanted} use LGF-constructed sources
(self-consistency check), while
Sections~\ref{sec:num-confinement}--\ref{sec:onesided-num} use analytic sources
(discretisation-error test).  Source positions are given in
Table~\ref{tab:setup}.

\begin{table}[htbp]
\centering
\caption{Computational setup and source positions.}\label{tab:setup}
\begin{tabular}{lcccc}
\toprule
Geometry & $|M^{+}|$ & $|\gamma|\,(|\gamma^{-}|)$ &
$\bm{x}_{0}^{-}$ (noise) & $\bm{x}_{0}^{+}$ (wanted) \\
\midrule
Circle ($r=0.5$) & 697 & 172 (88)   & $(0.9,0)$    & $(0,0)$ \\
L-shape          & 830 & 322 (163)  & $(1.0,0.5)$  & $(-0.2,-0.55)$ \\
Star (5-fold)    & 773 & 212 (108)  & $(0.9,0)$    & $(0,0)$ \\
\bottomrule
\end{tabular}
\end{table}

\subsection{Noise shielding}\label{sec:noise}

We first verify the full-density shielding control with LGF-constructed
point sources, which satisfy the discrete Helmholtz equation exactly.
Figure~\ref{fig:summary_abs} shows the magnitude of the total field
before and after control for all three geometries.  Before control, the
point-source field radiates through the protected region, producing
complex interference patterns that are visible even in the L-shaped
and star-shaped regions.  After control, \finalrev{the residual is at machine precision for the
smooth circular, reentrant L-shaped, and non-convex star-shaped regions.}
Control nodes on $\gamma^{-}$ are shown as cyan dots; they form a
single-exterior-layer ring around each protected region.

\begin{figure}[htbp]
\centering
\includegraphics[width=0.8\textwidth]{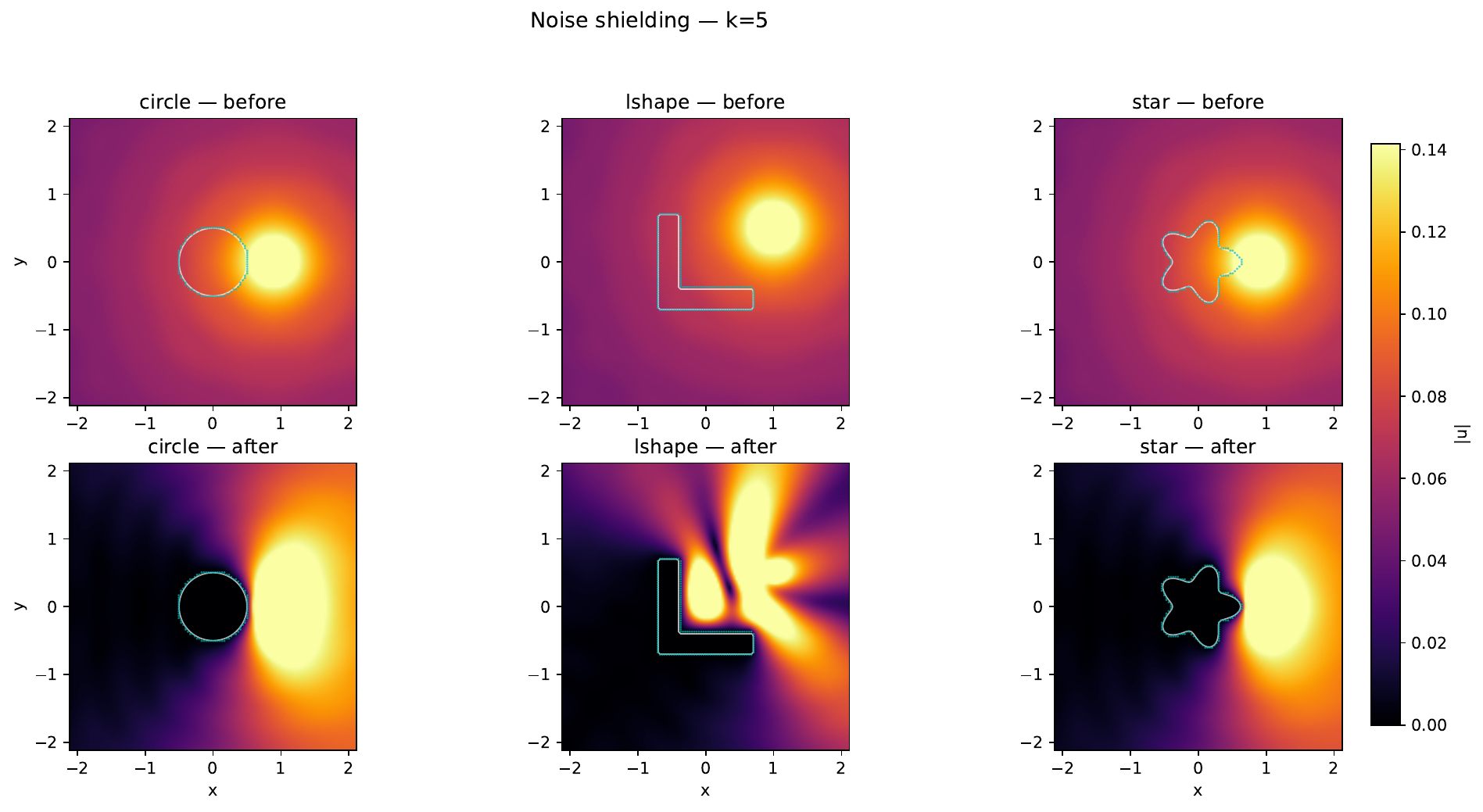}
\caption{Magnitude $|u^{h}|$ before (top row) and after (bottom row)
noise shielding for the three test geometries at $k=5$, LGF-constructed
point source.  Relative $\ell^{2}$ residuals: circle
$6.1\times10^{-15}$, L-shape $5.5\times10^{-15}$, star
$8.5\times10^{-15}$.}\label{fig:summary_abs}
\end{figure}

Figure~\ref{fig:attenuation} displays the pointwise attenuation
$20\log_{10}(|u^{h,-}|/|\widetilde{u}^{h}-u^{h,+}|)$ inside each
protected region on a dB scale.  Median attenuation exceeds $215$\,dB
for all three geometries, with the interior uniformly reaching
$>200$\,dB.  \finalrev{The attenuation remains high throughout the
protected region, showing volumetric cancellation rather than
interpolation only at the boundary-strip nodes.}

\begin{figure}[htbp]
\centering
\includegraphics[width=0.8\textwidth]{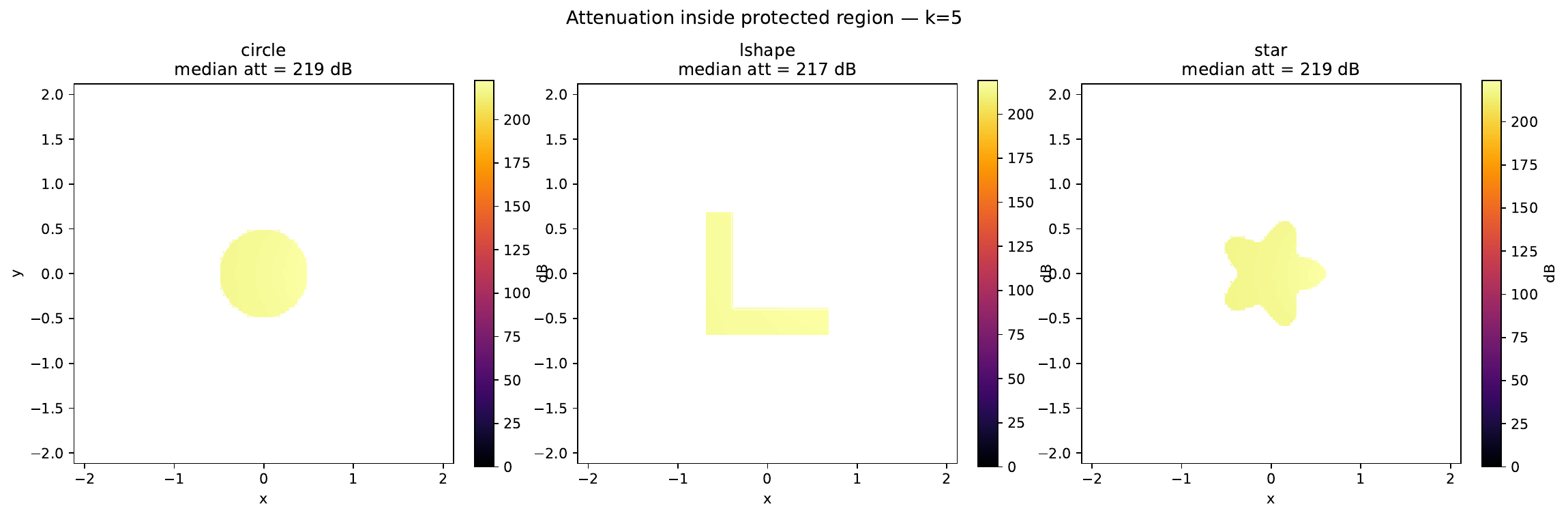}
\caption{Pointwise attenuation (dB) inside the protected region for the
three test geometries at $k=5$, LGF-constructed point source.  Median
attenuation: circle 219\,dB, L-shape 217\,dB, star 219\,dB.}\label{fig:attenuation}
\end{figure}

These results verify the discrete algebraic identity: when the incident
field is built from the same LGF used in the capacity matrix, the
control cancels it to machine precision.  The next section shows that
the cancellation is equally exact when an interior wanted source is
present.

\subsection{Wanted-sound preservation}\label{sec:wanted}

\begin{figure}[htbp]
\centering
\includegraphics[width=0.8\textwidth]{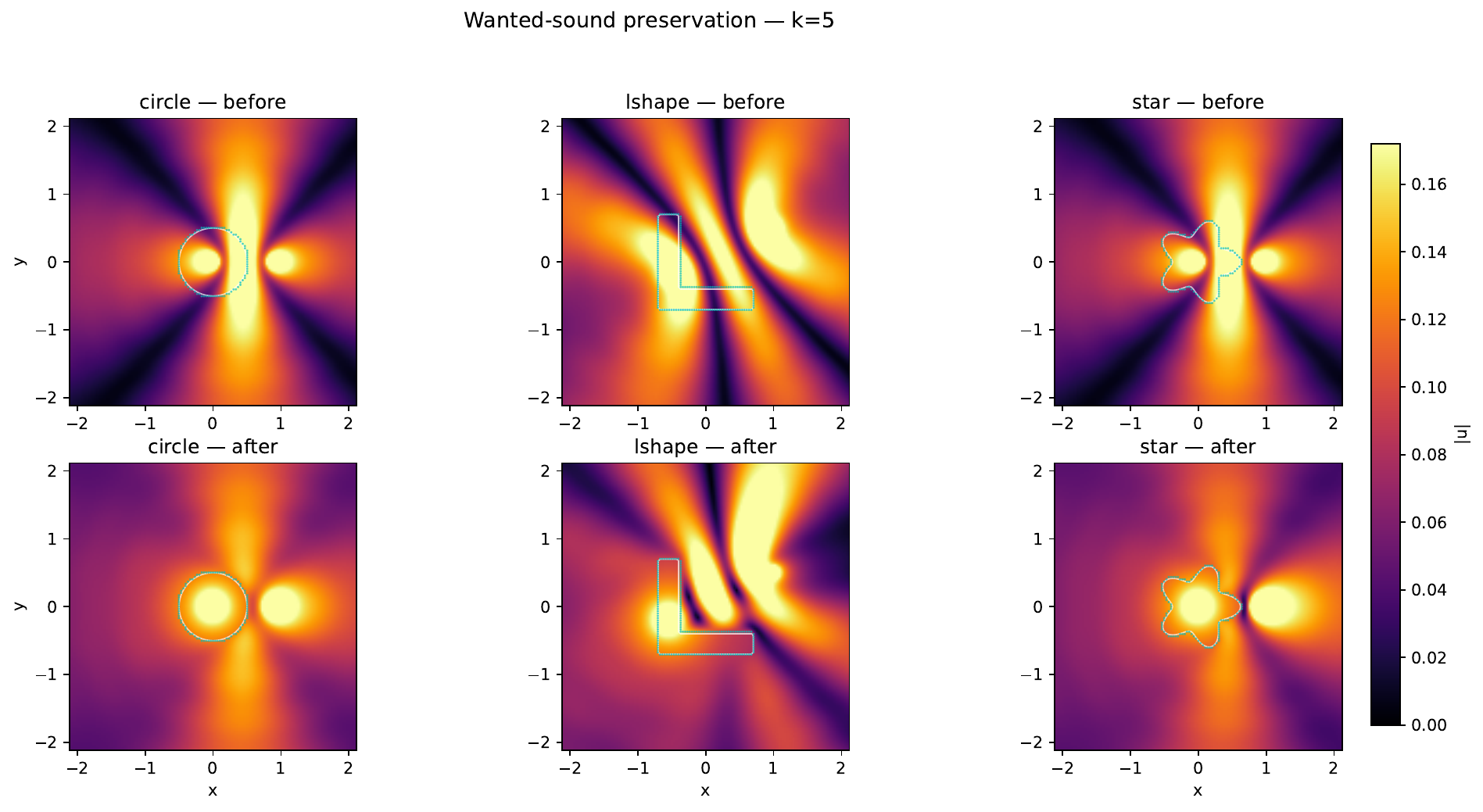}
\caption{Magnitude $|u^{h}|$ before (top row: noise $+$ wanted) and
after (bottom row: controlled) shielding for the three test geometries
at $k=5$, with interior wanted sources.}\label{fig:summary_abs_wanted}
\end{figure}

We add an interior wanted source to each geometry (positions in
Table~\ref{tab:setup}) and repeat the shielding computation.  The
control must now cancel the exterior noise while leaving the wanted
field unchanged inside $\Omegain$.  Figure~\ref{fig:summary_abs_wanted}
compares the total field before control (top row: noise $+$ wanted)
with the controlled field (bottom row).  After control, the interior
field equals $u^{h,+}$ to machine precision --- the wanted source is
preserved while the noise is eliminated.  This is the projection
property $\Ph_{\gamma}(u^{h,+}|_{\gamma})=0$ established in
Proposition~\ref{prop:kernel} in action: the wanted field lies in the
kernel of the Calder\'on projection, so it is annihilated by the
control formula.

\subsection{Acoustic confinement}\label{sec:num-confinement}

Figure~\ref{fig:confinement} demonstrates confinement on the circular
region ($R=0.5$) at $k=5$.  The adverse source is an interior
monopole at $(0,0)$ (amplitude $\times10$); the ambient field is a
plane wave $e^{ik\mathbf{d}\cdot\bm{x}}$,
$\mathbf{d}=(\sqrt{3}/2,0.5)$.  Control nodes are placed on
$\gamma^{+}$ (84 nodes, cyan dots).  After control, the exterior
field matches the plane wave to within a relative residual of
$1.4\times10^{-3}$, while the interior contains the confined adverse
source and the control response.

\begin{figure}[t]
\centering
\includegraphics[width=0.95\textwidth]{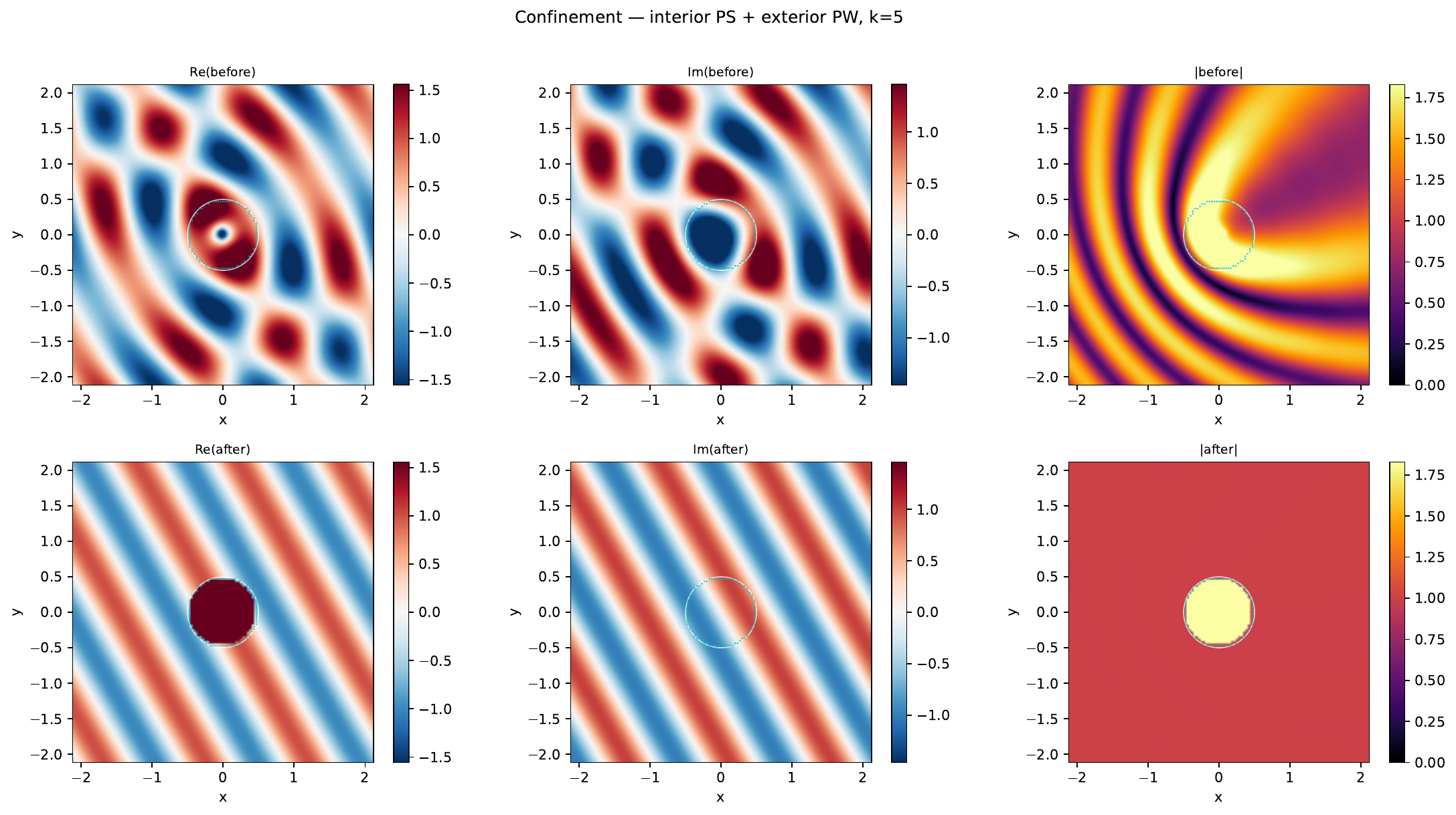}
\caption{Confinement: interior point source (adverse) confined by
controls on $\gamma^{+}$ (cyan dots, 84 nodes), exterior plane wave
(ambient) preserved.  Top row: before control.  Bottom row: after
control.  Columns: real part, imaginary part, magnitude.
The exterior field matches the ambient plane wave after control
(relative residual $1.4\times10^{-3}$, discretisation-limited).}
\label{fig:confinement}
\end{figure}

\subsection{Analytic incident fields}\label{sec:analytic}

Sections~\ref{sec:noise}--\ref{sec:wanted} used LGF-constructed point
sources, which satisfy the discrete Helmholtz equation exactly and are
cancelled to machine precision --- a verification of the algebraic
identity.  We now replace the LGF point source by two analytic incident
fields that do \emph{not} satisfy the discrete equation: a plane wave
$u^{\mathrm{pw}}=e^{ik\mathbf{d}\cdot\bm{x}}$
($\mathbf{d}=(\sqrt{3}/2,0.5)$) and a free-space point source
$u^{\mathrm{ps}}=\frac{i}{4}H_{0}^{(1)}(k|\bm{x}-\bm{x}_{s}|)$
with $\bm{x}_{s}=(0.9,0.0)$.  The LGF is used only for the capacity
matrix and control.

\begin{figure}[htbp]
\centering
\includegraphics[width=0.85\textwidth]{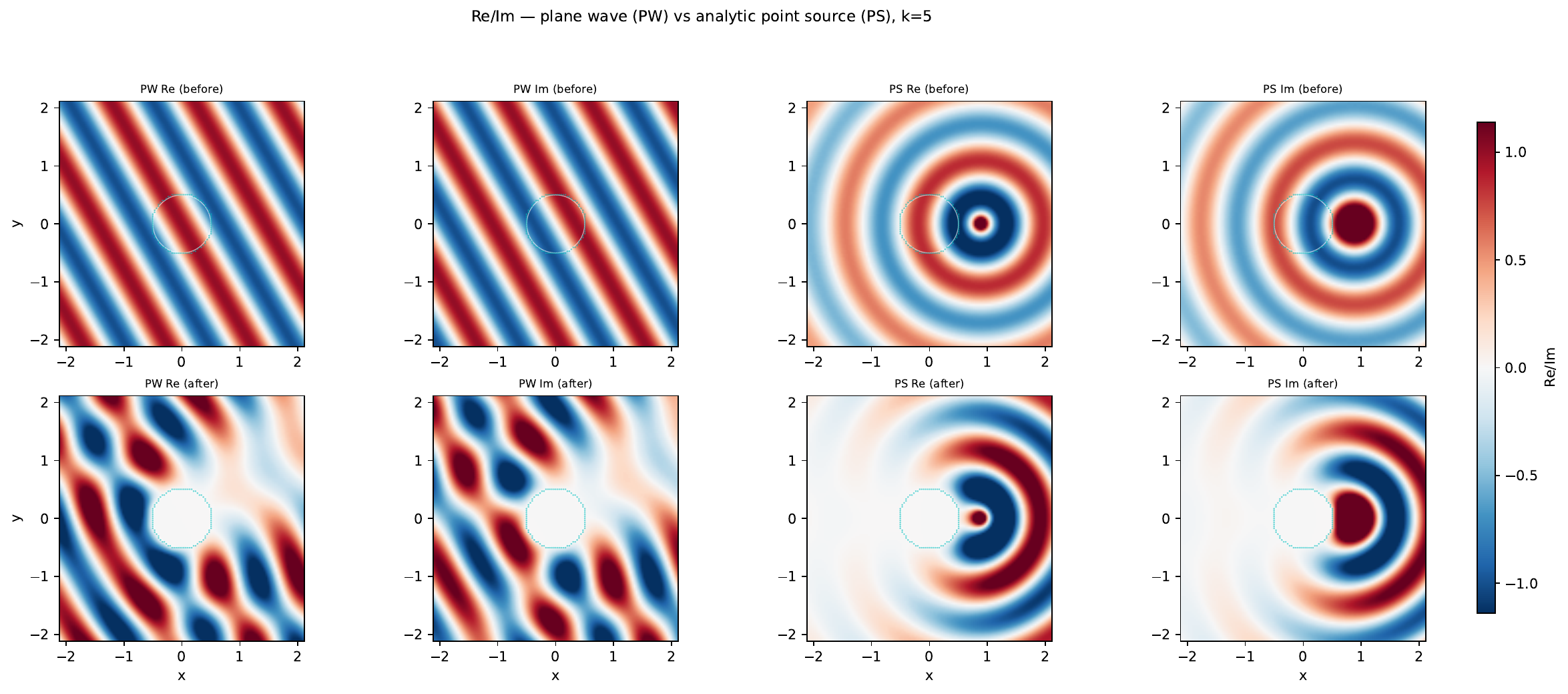}
\caption{Re/Im before (top row) and after (bottom row) shielding for
the circular region, $k=5$.  Columns 1--2: plane wave (PW).  Columns
3--4: analytic point source (PS).  The two residuals are comparable.}\label{fig:analytic_reim}
\end{figure}

Figure~\ref{fig:analytic_reim} compares the real and imaginary parts
before and after shielding for the circular region at $k=5$.  The
plane wave and point source produce nearly identical residuals
($2.0\times10^{-3}$ and $2.2\times10^{-3}$), consistent with the
same discretisation scale for the 5-point operator and boundary-strip
representation.  Combined $|\cdot|$ and Re/Im plots
for the L-shaped and star-shaped regions are provided as supplementary
material; the residuals are $1.3\times10^{-3}$ and $1.9\times10^{-3}$
(L-shape) and $1.9\times10^{-3}$ and $1.8\times10^{-3}$ (star) for
the plane wave and point source respectively.

\begin{figure}[htbp]
\centering
\includegraphics[width=0.55\textwidth]{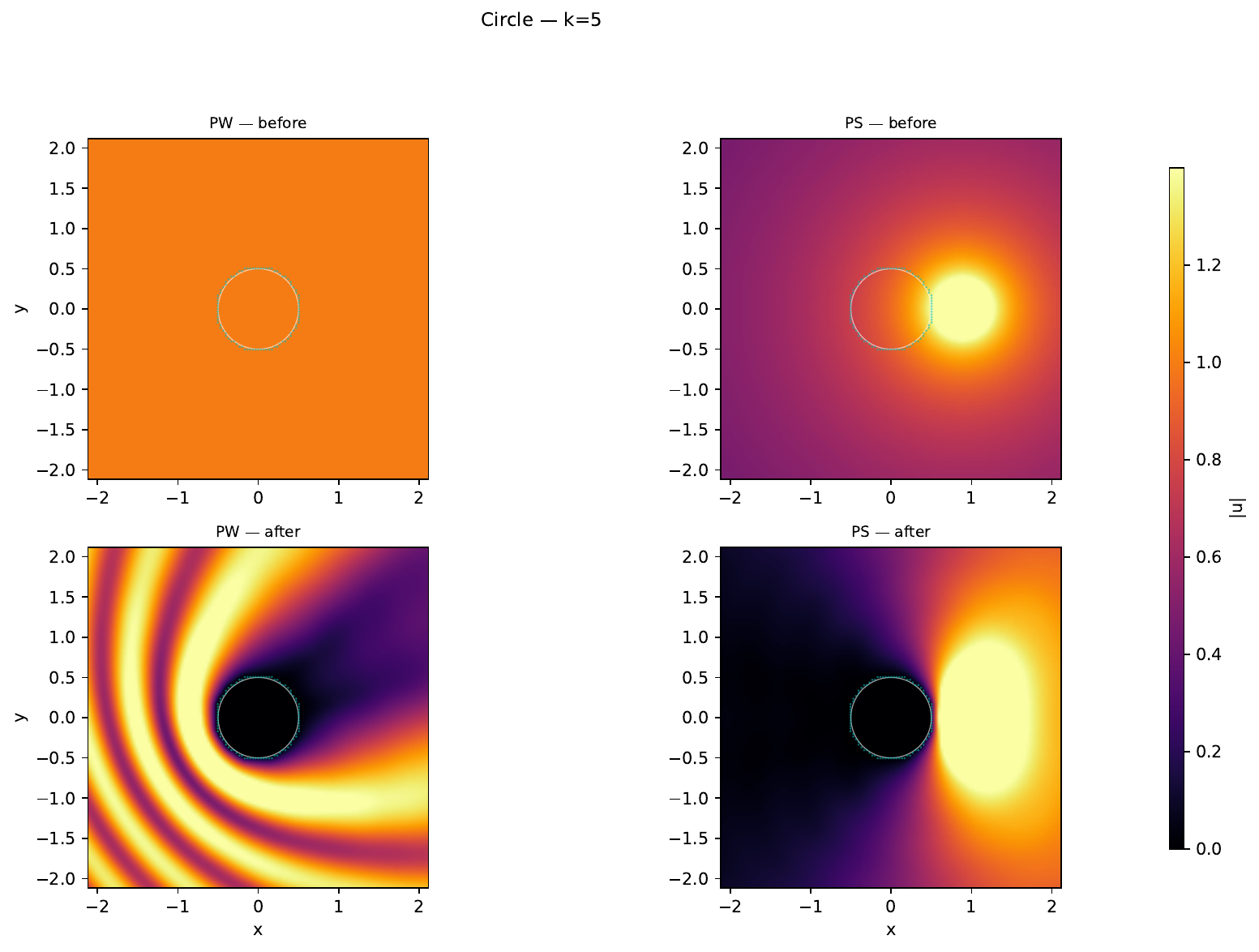}
\caption{Magnitude $|u^{h}|$ before (top) and after (bottom) shielding,
circular region.  Left: plane wave.  Right: analytic point source.}
\label{fig:analytic_abs}
\end{figure}

\begin{finalrevision}
Figures~\ref{fig:analytic_abs} and~\ref{fig:analytic_att} show comparable
controlled fields for the two incident-wave classes.  Across all three
geometries, the median attenuation is 57--60\,dB and the relative
residual is of order $2\times10^{-3}$.  These values are substantially
below the machine-precision attenuation obtained for LGF-consistent
sources because the analytic tests include the consistency error of the
discrete operator and boundary-strip representation.
\end{finalrevision}

\begin{figure}[htbp]
\centering
\includegraphics[width=0.75\textwidth]{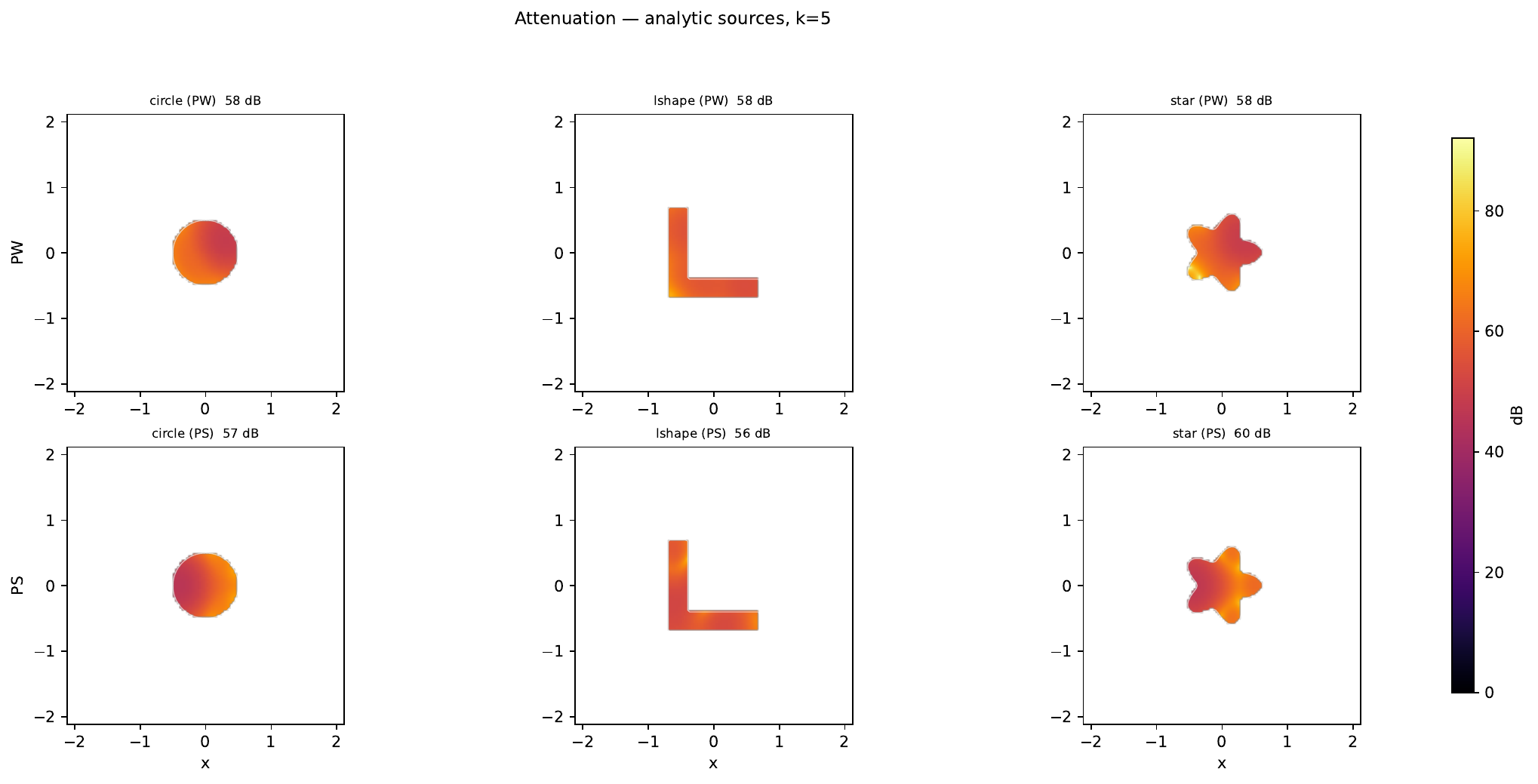}
\caption{Pointwise attenuation (dB) for plane wave (top row) and
analytic point source (bottom row), $k=5$.  Columns: circle (58/57\,dB
median PW/PS), L-shape (58/56\,dB), star (58/60\,dB).}\label{fig:analytic_att}
\end{figure}

We report the relative $\ell^{2}$ residual
$\varepsilon_{\mathrm{rel}}=\|\widetilde{u}^{h}-
u^{h,+}\|_{2,M^{+}}/\|u^{h,-}\|_{2,M^{+}}$
and the median pointwise attenuation inside $M^{+}$.

\rev{Both analytic fields are discretisation-limited and exhibit near-second-order convergence under grid refinement.}  Tables~\ref{tab:pwconv}
and~\ref{tab:psconv} report the plane-wave and analytic point-source
results for all three geometries at $n=31,63,127,255$ (pure noise).
\begin{revision}
The fitted rates are approximately $1.8$--$1.9$ over the four grids,
with the finest-grid pair approaching second order in several cases.
This is consistent with, but does not constitute a proof of, the
second-order truncation accuracy of the 5-point operator.
\newrev{At $n=31$ the diameter of the circular protected region is
resolved by fewer than eight mesh intervals, and the reentrant and
non-convex strips are similarly under-resolved; the first refinement
step should therefore be viewed as pre-asymptotic.}
\end{revision}  The first refinement step for the
circle and star (rate $\sim\!1.6$) is pre-asymptotic, due to the coarse
boundary representation on the $31\times31$ grid.  The attenuation
at $n=255$ reaches $\sim\!69$\,dB across all geometries and source
types, a factor of $10^{3.5}$ in amplitude reduction.

\begin{table}[htbp]
\centering
\caption{Plane-wave convergence: relative $\ell^{2}$ residual and
attenuation (dB) for all three geometries (pure noise, $k=5$).}\label{tab:pwconv}
\begin{tabular}{ccccccc}
\toprule
$n$ & \multicolumn{3}{c}{$\varepsilon_{\mathrm{rel}}$}
     & \multicolumn{3}{c}{Attenuation [dB]} \\
\cmidrule(lr){2-4}\cmidrule(lr){5-7}
 & Circle & L-shape & Star & Circle & L-shape & Star \\
\midrule
31   & $2.34\!\times\!10^{-2}$ & $2.11\!\times\!10^{-2}$ & $2.23\!\times\!10^{-2}$ & 37.9 & 34.1 & 37.3 \\
63   & $7.57\!\times\!10^{-3}$ & $5.54\!\times\!10^{-3}$ & $7.22\!\times\!10^{-3}$ & 46.4 & 45.5 & 46.4 \\
127  & $2.01\!\times\!10^{-3}$ & $1.34\!\times\!10^{-3}$ & $1.93\!\times\!10^{-3}$ & 57.8 & 57.6 & 57.4 \\
255  & $5.49\!\times\!10^{-4}$ & $3.68\!\times\!10^{-4}$ & $5.30\!\times\!10^{-4}$ & 69.0 & 68.9 & 68.7 \\
\midrule
Rate & $\approx\!1.8$ & $\approx\!1.9$ & $\approx\!1.8$ & & & \\
\bottomrule
\end{tabular}
\end{table}

\begin{table}[htbp]
\centering
\caption{Analytic point-source convergence (circle, L-shape, star;
$k=5$, amplitude $\times10$).  Residuals and attenuation are
comparable to the plane-wave case.}\label{tab:psconv}
\begin{tabular}{ccccccc}
\toprule
$n$ & \multicolumn{3}{c}{$\varepsilon_{\mathrm{rel}}$}
     & \multicolumn{3}{c}{Attenuation [dB]} \\
\cmidrule(lr){2-4}\cmidrule(lr){5-7}
 & Circle & L-shape & Star & Circle & L-shape & Star \\
\midrule
31   & $2.55\!\times\!10^{-2}$ & $2.97\!\times\!10^{-2}$ & $2.20\!\times\!10^{-2}$ & 38.6 & 30.1 & 38.3 \\
63   & $8.20\!\times\!10^{-3}$ & $8.94\!\times\!10^{-3}$ & $7.06\!\times\!10^{-3}$ & 46.1 & 40.3 & 48.8 \\
127  & $2.16\!\times\!10^{-3}$ & $2.18\!\times\!10^{-3}$ & $1.83\!\times\!10^{-3}$ & 56.9 & 52.7 & 59.8 \\
255  & $5.41\!\times\!10^{-4}$ & $5.45\!\times\!10^{-4}$ & $4.64\!\times\!10^{-4}$ & 69.1 & 64.8 & 71.5 \\
\midrule
Rate & $\approx\!1.8$ & $\approx\!1.9$ & $\approx\!1.9$ & & & \\
\bottomrule
\end{tabular}
\end{table}

\subsection{Two-sided vs.\ one-sided measurement}\label{sec:onesided-num}

\begin{table}[htbp]
\centering
\caption{Two-sided vs.\ one-sided ($\gamma^{-}$-only) shielding:
relative residual, attenuation, and wall-clock time
(circle, analytic point source, $k=5$).}\label{tab:onesided-comp}
\begin{tabular}{cccccccc}
\toprule
$n$  & $|\gamma|$ &
  $\varepsilon_{2}$ & Att$_2$ & $t_2$ (s) &
  $\varepsilon_{1}$ & Att$_1$ & $t_1$ (s) \\
\midrule
63  & 84  & $8.2\!\times\!10^{-3}$ & 46 & 0.13 & $1.9\!\times\!10^{-2}$ & 36 & 0.13 \\
127 & 172 & $2.2\!\times\!10^{-3}$ & 57 & 1.04 & $7.0\!\times\!10^{-3}$ & 45 & 1.04 \\
255 & 340 & $5.4\!\times\!10^{-4}$ & 69 & 8.32 & $2.2\!\times\!10^{-3}$ & 55 & 8.18 \\
\bottomrule
\end{tabular}
\end{table}

Table~\ref{tab:onesided-comp} compares full two-sided data with the
conditional one-sided reconstruction of
Theorem~\ref{thm:one-sided}.  Because the incident field is analytic
rather than LGF-constructed, both columns include discretisation
consistency error.  At $k=5$ the one-sided residual is a factor of
approximately $2$--$4$ larger and decreases under refinement.  

Figure~\ref{fig:onesided-comp} confirms that at $k=5$, $n=127$ the
one-sided shielding is visually indistinguishable from the two-sided
result.

\begin{figure}[htbp]
\centering
\includegraphics[width=0.65\textwidth]{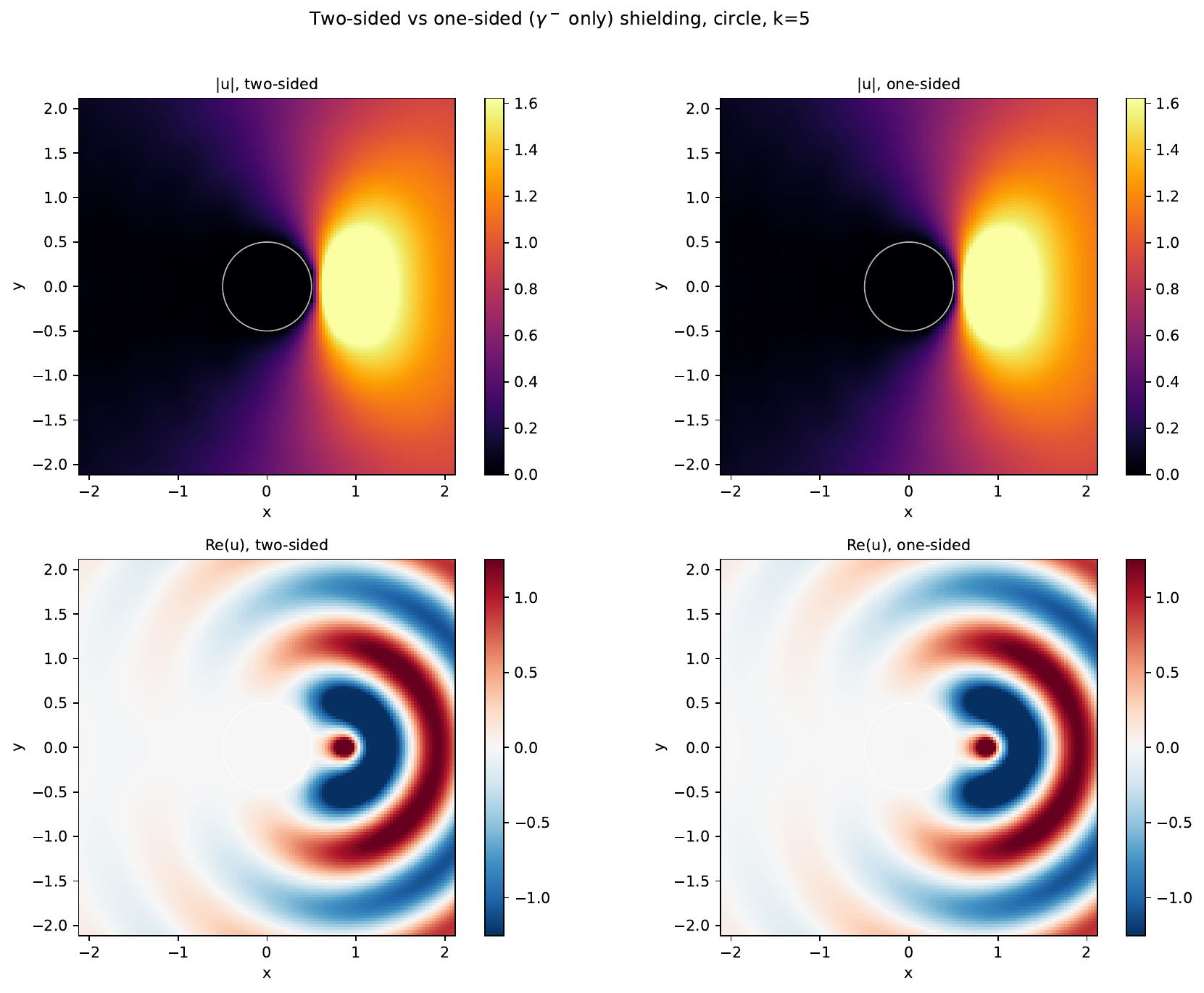}
\caption{Two-sided (left) vs.\ one-sided $\gamma^{-}$-only (right)
shielding, circle, $k=5$, $n=127$.  Top: magnitude.  Bottom: real
part.  \rev{At this configuration the two plots are visually close, although the one-sided residual is larger.}}\label{fig:onesided-comp}
\end{figure}

\subsection{Wavenumber and grid-refinement sensitivity}\label{sec:conditioning}

Table~\ref{tab:k10} extends the shielding results to $k=10$ across all
three test geometries using the analytic point source.  The attenuation
drops to 35--41\,dB (compared to $>215$\,dB with LGF-constructed
sources), consistent with the coarser resolution at higher wavenumber
($\sim\!19$ points per wavelength).  The condition numbers are
consistently lower at $k=10$ than at $k=5$.

\begin{table}[htbp]
\centering
\caption{Shielding results at $k=10$ across geometries
  ($127\times127$ grid, analytic point source).}\label{tab:k10}
\begin{tabular}{lcccc}
\toprule
Geometry & $|\gamma|$ & $\kappa(S_{\gamma\gamma})$ &
  $\varepsilon_{\mathrm{rel}}$ & Attenuation \\
\midrule
Circle ($r=0.5$) & 172 & $6.5\times10^{1}$  & $2.0\times10^{-2}$ & 36\,dB \\
L-shape          & 322 & $9.2\times10^{1}$  & $9.8\times10^{-3}$ & 41\,dB \\
Star (5-fold)    & 212 & $6.8\times10^{1}$  & $2.0\times10^{-2}$ & 35\,dB \\
\bottomrule
\end{tabular}
\end{table}

\begin{revision}
The smaller condition numbers at $k=10$ are specific to these grids and
geometries.  Changing $k$ shifts the discrete Helmholtz spectrum and may
move the problem either toward or away from a resonance; no monotone
conditioning trend with wavenumber is expected in general.
\end{revision}

\begin{finalrevision}
Table~\ref{tab:condh} reports the single-layer condition number under
grid refinement for the circular region at $k=5$.  The variation is
non-monotone: $\kappa(S_{\gamma\gamma})$ rises from $37.8$ at $p=6$ to
$361$ at $p=7$, then falls to $157$ at $p=8$.  Refinement changes both
the finite-difference spectrum and the staircase boundary, so the
discrete eigenvalue nearest $k^{2}$ may move toward or away from the
operating frequency.  The $p=7$ spike is therefore consistent with a
near-resonant configuration, although no specific eigenvalue distance
is computed here.

\begin{table}[t]
\centering
\caption{Single-layer capacity-matrix condition number under grid
refinement (circle, $k=5$).}\label{tab:condh}
\begin{tabular}{ccccc}
\toprule
$p$ & $n$ & $h$ & $|\gamma|$ & $\kappa(S_{\gamma\gamma})$ \\
\midrule
5 & 31  & $1.34\times10^{-1}$ & 44  & 14.1 \\
6 & 63  & $6.72\times10^{-2}$ & 84  & 37.8 \\
7 & 127 & $3.36\times10^{-2}$ & 172 & 361 \\
8 & 255 & $1.68\times10^{-2}$ & 340 & 157 \\
\bottomrule
\end{tabular}
\end{table}
\end{finalrevision}

\subsection{Measurement noise sensitivity}

In practice the boundary-strip trace $\xi_{\gamma}$ is obtained from
microphone measurements and is subject to noise.  To assess robustness,
we use the analytic point source (the more realistic test case) and add
independent circularly symmetric complex Gaussian noise to
$\xi_{\gamma}$ at relative standard deviations
$\sigma\in\{10^{-6},10^{-4},10^{-2},10^{-1}\}$, recompute the control
$g^h$ from the perturbed trace, and measure the resulting attenuation.
The results are summarised in Table~\ref{tab:noise}.

\begin{table}[t]
\centering
\caption{Effect of measurement noise on shielding performance
  (circle, $k=5$, analytic point source).  Attenuation in dB for one
  representative noise realization.}\label{tab:noise}
\begin{tabular}{ccc}
\toprule
Relative noise $\sigma$ & Attenuation (dB) & Degradation (dB) \\
\midrule
0 (clean)     & 56.9 & ---   \\
$10^{-6}$   & 56.9 & 0.0 \\
$10^{-4}$   & 56.8 & 0.1 \\
$10^{-2}$   & 34.4 & 22.5 \\
$10^{-1}$   & 12.1 & 44.8 \\
\bottomrule
\end{tabular}
\end{table}

With the analytic source, the clean attenuation is limited by
discretisation error (57\,dB) rather than by the algebraic identity,
so sub-discretisation-level measurement noise ($\sigma\leq10^{-4}$)
produces essentially no degradation — the method is robust up to
noise levels comparable to the discretisation floor.  At
$\sigma=10^{-2}$ (1\% relative noise), the method still retains
34\,dB of attenuation.  \begin{revision}
For larger $\sigma$, the degradation is consistent with amplification
by the inverse capacity matrix, as quantified by
Remark~\ref{rem:robustness}.  The table reports a representative
realization and therefore illustrates a trend rather than a statistical
confidence interval.  Regularization and ensemble noise studies are
left to future work.
\end{revision}

\section{Computational realization and complexity}\label{sec:algorithm}

\subsection{Algorithmic realization}\label{subsec:algorithm}

\begin{revision}
The implementation is the capacity-matrix realization proved in
Theorem~\ref{prop:capacity}.  Let
$S_{ij}=h^{2}\Gh(\gamma_i-\gamma_j)$ and solve
\begin{equation}
  S_{\gamma\gamma}\lambda=\xi_{\gamma}.
  \label{eq:capacity}
\end{equation}
For the particular outgoing extension
$V^{h}=\Gh\ast\lambda$, one has $\Lh V^{h}=\lambda$ and
$V^{h}|_{\gamma}=\xi_{\gamma}$.  Splitting
$\lambda=\lambda^{+}+\lambda^{-}$ by the two sublayers gives
\[
  \Ph\xi_{\gamma}=\Gh\ast\lambda^{-},
  \qquad
  \Qh\xi_{\gamma}=\Gh\ast\lambda^{+}.
\]
Thus the shielding control is
\begin{equation}
  g^{h}_{\rm sh}=-\lambda^{-},
  \label{eq:g-from-lambda}
\end{equation}
while the confinement control is $g^{h}_{\rm conf}=-\lambda^{+}$.
The corresponding field is evaluated by LGF convolution and added to
the measured or simulated total field.  The full potential
$\Gh\ast\lambda$ reproduces the original trace; it is the sublayer
potentials $\Gh\ast\lambda^{-}$ and $\Gh\ast\lambda^{+}$ that realize
the two Calder\'on components.
\end{revision}

\subsection{Comparison with truncated-domain realizations}

\begin{finalrevision}
For fixed $(h,k)$, the LGF table is reused across protected geometries
and source configurations.  A change of geometry requires a new
boundary-strip classification and capacity-matrix factorization, but no
volumetric Helmholtz solve on an auxiliary domain.  A truncated-domain
method may likewise reuse its factorization while the auxiliary domain
and outer treatment remain fixed; its projection, however, changes when
either choice changes.

The principal distinction is therefore the radiation model.  The LGF
selects the infinite-lattice outgoing solution directly, whereas a
bounded auxiliary realization inherits the accuracy of its outer
boundary operator.  Theorem~\ref{thm:equivalence} shows that both
constructions represent the same interior trace space; equality as
operators requires the auxiliary problem to reproduce the exact lattice
radiation condition.  Once assembled, the capacity matrix and LGF
evaluation maps are reusable for all source configurations associated
with the same boundary strip.
\end{finalrevision}

\subsection{LGF precomputation and complexity}

\begin{finalrevision}
The LGF depends only on $(h,k)$.  Tabulation on $[0,n-1]^2$ is therefore
performed once and reused across geometries; for each boundary strip,
assembling $S_{\gamma\gamma}$ requires $O(|\gamma|^{2})$ kernel lookups,
and a direct factorization costs $O(|\gamma|^{3})$ operations and
$O(|\gamma|^{2})$ storage.  The tested geometries have
$|\gamma|\approx100$--$300$, for which the direct implementation is
inexpensive and the factorization is reused across all incident fields.

A $d$-dimensional LGF table requires $O(n^{d})$ complex values.  This is
modest in the present two-dimensional experiments but becomes a primary
cost in three dimensions.  Scalable extensions will require symmetry,
on-demand kernel evaluation, compression, or hierarchical/FFT-based
application, together with iterative treatment of larger capacity
systems.
\end{finalrevision}

\section{Conclusions and outlook}\label{sec:outlook}

\begin{finalrevision}
We developed an infinite-lattice LGF realization of the discrete
Calder\'on projection for active acoustic shielding and confinement.
The outgoing radiation condition is built into the kernel, while the
geometry-specific computation reduces to a capacity system on the
lattice boundary strip.  The two sublayer densities yield exact
single-layer shielding and confinement controls, and exterior-only
measurements suffice for pure-noise shielding under explicit
invertibility conditions.

LGF-consistent sources verify the discrete identities to machine
precision on smooth, reentrant, and non-convex regions.  Analytic plane
waves and point sources converge at observed rates of approximately
$1.8$--$1.9$, and the conditioning and noise experiments identify the
main stability limitations.  The present direct implementation is
two-dimensional and best suited to moderate boundary-strip sizes.

The capacity-matrix formulation also provides a natural starting point 
for constrained and regularized active-control problems. In practical 
settings, the control amplitudes may be bounded, only a subset of boundary-strip
actuators may be available, and the measured trace may be incomplete 
or contaminated by noise. These situations lead to regularized least-squares, 
sparse-control, and sensor–actuator placement problems posed entirely on the 
boundary strip. The present work focuses instead on the canonical unconstrained 
projection in order to isolate the role of the infinite-lattice Green’s 
function and the exact discrete outgoing condition. Optimization under 
physical constraints will be considered separately.

\end{finalrevision}

\section*{Acknowledgments}
This work was partially funded by the Natural Science Foundation of China
(NSFC Grant No.~12401546) and Wenzhou Kean University
(Grant Nos.~ISRG2024003 and KY20250604000452). 

\bibliographystyle{unsrtnat}
\bibliography{cas-refs}

\end{document}